\newcommand{\bK}{{\mathbb K}}
\newcommand{\bN}{{\mathbb N}}
\newcommand{\bR}{{\mathbb R}}
\newcommand{\bC}{{\mathbb C}}
\newcommand{\bF}{{\mathbb F}}
\newcommand{\bQ}{{\mathbb Q}}
\newcommand{\bP}{{\mathbb P}}
\newcommand{\bZ}{{\mathbb Z}}
\newcommand{\cC}{{\mathcal C}}
\newcommand{\cD}{{\mathcal D}}
\newcommand{\cO}{{\mathcal O}}
\newcommand{\cF}{{\mathcal F}}
\newcommand{\cG}{{\mathcal G}}
\newcommand{\cL}{{\mathcal L}}
\newcommand{\cP}{{\mathcal P}}
\newcommand{\cT}{{\mathcal T}}
\newcommand{\fg}{{\mathfrak g}}
\newcommand{\fh}{{\mathfrak h}}
\newcommand{\1}{{\mathbb I}}
\newcommand{\ba}{{\bf a}}
\newcommand{\bc}{{\bf c}}
\newcommand{\bl}{{\bf l}}
\newcommand{\br}{{\bf r}}
\newcommand{\shh}{{\bf s}{\bf h}}
\newcommand{\sh}{\shh_{n,m}}
\newcommand{\wh}{\widetilde{\shh}_{n,m}}
\renewcommand{\sl}{{\mathfrak s}{\mathfrak l}}
\renewcommand{\sp}{{\mathfrak s}{\mathfrak p}}
\newcommand{\SV}{{{\mathrm S}{\mathrm V}}}
\newcommand{\RW}{{{\mathrm R}{\mathrm W}}}
\newcommand{\SVX}{\SV(X)}
\newcommand{\TX}{\cT_X^-}
\newcommand{\Ext}{{{\mathrm E}{\mathrm x}{\mathrm t}}}
\newcommand{\Endd}{{{\mathrm e}{\mathrm n}{\mathrm d}}}
\newcommand{\End}{{{\mathrm e}{\mathrm n}{\mathrm d}_\cC}}
\newcommand{\moC}{{{\mathrm m}{\mathrm o}{\mathrm d}_{\cC}}}
\newcommand{\lb}{\left(\!\!}
\newcommand{\rb}{\!\!\right)}
\newtheorem{theorem}{Theorem}
\newtheorem{prop}[theorem]{Proposition}
\newtheorem{que}{Question}
\newenvironment{mylist}{\begin{list}{}{
\setlength{\itemsep}{0mm}
\setlength{\parskip}{0mm}
\setlength{\topsep}{1mm}
\setlength{\parsep}{0mm}
\setlength{\itemsep}{0mm}
\setlength{\labelwidth}{6mm}
\setlength{\labelsep}{3mm}
\setlength{\itemindent}{0mm}
\setlength{\leftmargin}{9mm}
\setlength{\listparindent}{6mm}
}}{\end{list}}
\begin{document}

\title{Lie algebras in symmetric monoidal categories}
\author{Dmitriy Rumynin}
\address{Department of Mathematics, University of Warwick, Coventry, CV4 7AL, UK}
\email{D.Rumynin@warwick.ac.uk}
\thanks{The author was partially supported by the Max Planck Institute for Mathematics, Bonn and Dinastiya Foundation.}
\date{May 25, 2012}
\subjclass{18D10, 53C26, 17B20}

\begin{abstract}
We study algebras defined by identities in symmetric monoidal categories.
Our focus is on Lie algebras. Besides usual Lie algebras, there are examples
appearing in the study of knot invariants and Rozansky-Witten invariants.
Our main result is a proof of Westbury's conjecture for K3-surface:
there exists a Lie algebra homomorphism from Vogel's universal
simple Lie algebra to the Lie algebra describing the Rozansky-Witten invariants
of a K3-surface.
Most of the paper involves setting up a proper language to discuss
the problem and we formulate nine open questions as we proceed.
\end{abstract}

\maketitle

In 1996 Deligne made a conjecture that exceptional Lie algebras formed a series \cite{Deli}.
He proposed a possible explanation: he expected an object specializing to all exceptional
Lie algebras. Influenced by this conjecture, Vogel proposed such an object in 1999 \cite{Vog2}.
Besides exceptional Lie algebras, it specializes to all simple (complex finite dimensional)
Lie algebras as well as some Lie superalgebras.
The object was named Vogel's universal simple Lie algebra $\fg_V$.
With Vogel's paper still unpublished the object remains a mystery. 
It is a subject of several recent studies \cite{LaMa, MkSV, West}.

The aim of this paper is to describe another specialization of $\fg_V$.
In a private conversation Westbury has asked the author whether $\fg_V$
specializes to $\fg_X$, a certain Lie algebra associated by Kapranov \cite{Kapr}
to an irreducible holomorphic symplectic manifold $X$ in the study of Rozansky-Witten
invariants \cite{RW}. By {\em Westbury's Conjecture} we understand existence of such a specialization.
We give an affirmative answer to Westbury's Conjecture in the case of a K3-surface. It remains
open in the general case.

Most of the paper is devoted to setting up a machinery 
just to ask the question rigorously.
There are several levels of generality in which
the setup can be laid out.
As opposed to a more general operadic language of Hinich and Vaintrob \cite{HiVa},
we choose a more elementary language in the spirit of Sawon \cite{Sawo}
to benefit a wider audience.

Here is a detailed description of the paper.
Section 1 contains all necessary definitions and facts about the categories.
In Section 2 we discuss algebras in the categories. We speculate on possible utility of
such algebras in the study of identities. 
In Section 3 we study the Lie algebras. 
We formulate three
different notions of simplicity. We introduce Vogel's universal Lie algebra.
In Section 4 we investigate the Lie algebras
that appear in the study of the Rozansky-Witten invariants
and give an affirmative answer to Westbury's conjecture in
the case of a K3-surface.

The author expresses especial gratitude to B. Westbury for formulating the conjecture and his interest in the work
and to A. Kuznetsov for explaining the stability of the tangent bundle on a K3-surface.
The author would like to thank 
A. Baranov, V. Kac, D. Panyushev, A. Rosly and J. Sawon 
for valuable discussions and useful information.


\section{Categories}
\subsection{Tensor categories}
By {\em a tensor category over a commutative ring} $\bK$ we understand a $\bK$-linear\footnote{The home-sets are $\bK$-modules and the compositions
are $\bK$-bilinear, for instance, if $\bK=\bZ$ this means that the category is preadditive.}
symmetric monoidal category 
$$
\cC = (\cC, \otimes, \ba, \1, \bl, \br, \bc)
$$
where 
$\otimes: \cC\times\cC\rightarrow\cC$ is the tensor product bifunctor, 
$\ba_{A,B,C}: (A\otimes B)\otimes C\rightarrow A\otimes (B\otimes C)$ is the associativity transformation, 
$\bc_{A,B}: A\otimes B\rightarrow B\otimes A$ is the symmetric braiding, 
$\1\in\cC$ is the monoidal identity, 
$\bl_{A}: \1\otimes A\rightarrow A$ and $\br_{A}: A\otimes \1\rightarrow A$ are left and right unity transformations.
We require the  structures to agree: 
the tensor product must be $\bK$-bilinear on morphisms
and the natural morphism $\bK \rightarrow \End (\1) =\hom_\cC (\1, \1)$
must be a ring isomorphism. 
We attempt to follow notations and terminology of Joyal and Street \cite{JoSt},
although their ``tensor category'' is the same as a monoidal category,
while our tensor categories are $\bK$-linear and symmetric.

Observe that $\cC$ is not required to be abelian or even additive. 
Additivity can be easily fixed by completing the category with respect to finite direct sums 
and the zero object
but not being abelian is 
an important distinction with other ``tensor categories'' that appear in the literature.
Our tensor categories are close to those of Hinich and Vaintrob \cite{HiVa}
with an exception of our additional assumption
that 
$\bK \rightarrow \End (\1)$
is an isomorphism.
This assumption is not very restrictive.
\begin{prop}
Suppose $\cC = (\cC, \otimes, \ba, \1, \bl, \br, \bc)$
is a symmetric monoidal additive category. Then the ring
$\bK_\cC = \End (\1)$ is commutative and
the category $\cC$ is a tensor category of $\bK_\cC$.
\end{prop}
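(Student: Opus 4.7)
The plan splits into two parts: first, show that $\bK_\cC = \End(\1)$ is commutative by an Eckmann--Hilton argument; then, use the left unitor to transport this ring structure to every hom-set in a way compatible with both composition and $\otimes$. For commutativity, introduce on $\End(\1)$ a second binary operation
\[
f * g := \bl_\1 \circ (f \otimes g) \circ \bl_\1^{-1},
\]
whose unit is $\mathrm{id}_\1$ by naturality of $\bl$. Bifunctoriality of $\otimes$ supplies the interchange law $(f \circ f') * (g \circ g') = (f * g) \circ (f' * g')$, and Eckmann--Hilton then forces $* = \circ$ and commutativity. The symmetry $\bc$ plays no role at this stage, so the conclusion actually holds for any monoidal additive category.

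For the second part, I would define the action of $k \in \bK_\cC$ on a morphism $f \colon A \to B$ by
\[
k \cdot f := \bl_B \circ (k \otimes f) \circ \bl_A^{-1},
\]
and then verify, in order: (i) this is a $\bK_\cC$-module structure on $\hom_\cC(A,B)$, with $\mathrm{id}_\1 \cdot f = f$ from naturality of $\bl$ and $(kk') \cdot f = k \cdot (k' \cdot f)$ from associativity of $\otimes$ combined with $kk' = k * k'$ from the first part; (ii) additivity of $\cdot$ in each argument, a routine consequence of the biadditivity of $\otimes$ in an additive symmetric monoidal category; (iii) $\bK_\cC$-bilinearity of composition, immediate from bifunctoriality of $\otimes$ together with naturality of $\bl$; (iv) $\bK_\cC$-bilinearity of $\otimes$ on morphisms, i.e.\ the assertion that a scalar $k$ inserted in either tensor factor has the same effect on the product. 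Finally, the map $\bK_\cC \to \End(\1)$ is the identity by construction, hence an isomorphism.

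The only genuine coherence bookkeeping lies in (iv), which demands $k \cdot (f \otimes g) = (k \cdot f) \otimes g = f \otimes (k \cdot g)$; both equalities are consequences of the pentagon and triangle axioms together with the identification $\bl_\1 = \br_\1$ on $\1 \otimes \1$. The cleanest way to dispatch the bookkeeping is to invoke Mac Lane's coherence theorem and replace $\cC$ by a strict symmetric monoidal equivalent, after which (iv) reduces to the manifest identity $k \otimes f \otimes g = (k \otimes f) \otimes g = f \otimes (k \otimes g)$ modulo the Eckmann--Hilton conclusion of the first part. I expect this last step, rather than anything conceptual, to be the only place that requires any care.
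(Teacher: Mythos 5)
Your overall strategy (Eckmann--Hilton for commutativity of $\End(\1)$, then transporting the scalar action to every hom-set via the unitors) is the same as the paper's, and steps (i)--(iii) are fine. The gap is in step (iv), which you correctly single out as the crux but then dismiss too quickly. After strictification the two sides of your ``manifest identity'' are $(k\otimes f)\otimes g$ and $f\otimes(k\otimes g)$, both morphisms $A\otimes C\to B\otimes D$; their equality is precisely the assertion that $k\otimes \mathrm{id}_A=\mathrm{id}_A\otimes k$ for every object $A$, i.e.\ that the left and right actions of $\End(\1)$ on hom-sets coincide. This does \emph{not} follow from the Eckmann--Hilton computation, which only concerns endomorphisms of $\1$ itself: a natural endomorphism of the identity functor is not determined by its value at $\1$, so knowing $k*k'=k\circ k'$ in $\End(\1)$ says nothing yet about $k\otimes\mathrm{id}_A$ versus $\mathrm{id}_A\otimes k$. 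The statement is true in any monoidal category (it is Kelly's centrality result, essentially the content of the Kelly--Laplaza reference the paper cites), but it requires its own coherence argument and is not visible from strictness alone.

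This is exactly where the paper's proof does real work, and where your remark that ``the symmetry $\bc$ plays no role'' goes wrong for the proposition as a whole: the paper introduces \emph{both} the left action (via $\bl$) and the right action (via $\br$) and shows they agree by a two-line diagram chase using naturality of $\bc$ together with the identity $\br_X\circ\bc_{\1,X}=\bl_X$. Once $\alpha f=f\alpha$ is known, your (iv) follows from the triangle axiom, since $(\alpha\cdot f)\otimes g$ unwinds to the left action on $f\otimes g$ while $f\otimes(\alpha\cdot g)$ unwinds to the right action. So either import the braiding at this point, as the paper does, or replace the appeal to ``manifestness'' by Kelly's argument valid in an arbitrary monoidal category; as written, the step is unjustified.
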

\begin{proof}
The ring $\bK_\cC$ is commutative by the Eckmann-Hilton argument \cite[Prop 6.2]{KeLa}. 
Each $\hom_\cC (X, Y)$ is a $\bK_\cC$-$\bK_\cC$-bimodule via the left action
$$
\End (\1) \times \hom_\cC (X, Y) \rightarrow \hom_C (\1\otimes X, \1 \otimes Y) \cong \hom_\cC (X, Y)
$$
and the right action
$$
\hom_\cC (X, Y) \times \End (\1)  \rightarrow 
\hom_C (X\otimes\1 , Y \otimes \1) \cong \hom_\cC (X, Y).
$$
These actions coincide: given $\alpha\in \End (\1)$, $f\in \hom_\cC (X, Y)$,
both $\alpha f$ and $f\alpha $ can be read off the diagram
$$
\begin{CD} 
X  @> {\bl_X^{-1}} >> {\1\otimes X} @> {\alpha\otimes f} >> {\1\otimes Y}   @> {\bl_Y} >> Y\\ 
@VV {=} V  @V{\bc_{\1,X}} VV @VV {\bc_{\1,Y}} V @VV {=} V \\ 
X  @> {\br_X^{-1}} >> {X\otimes \1} @>>{f\otimes \alpha}> {Y\otimes\1} @> {\br_Y} >> Y \\ 
\end{CD} 
$$
as compositions from the top left corner to the bottom right corner.
The diagram is commutative: the outside squares are commutative 
by the standard fact \cite[Prop 2.1]{JoSt}
while the middle square is commutative because
the commutativity constraint $\bc$ is a natural transformation.
Hence, $\alpha f=f \alpha$.

Finally, $\bK_\cC$-bilinearity of compositions is evident.
\end{proof}

Tensor categories are ubiquitous in modern mathematics, so we spare the reader of an example now but
introduce some as we go along.

%
%

\subsection{Tensor powers in a tensor category}
Given an object $A$ in a tensor category over $\bK$, 
we define its iterated tensor products by
$A^{\otimes 0} := \1$
and
$A^{\otimes n}:= A^{\otimes (n-1)}\otimes A$.
There is an action of the symmetric group $S_n$
on the object $A^{\otimes n}$, i.e. a semigroup homomorphism
$$
S_n \rightarrow \End (A^{\otimes n}, A^{\otimes n}), \ \
\sigma \mapsto \widetilde{\sigma}_A
,$$
defined as follows. 
Using  a chain of associativity constraints
$$\gamma_i : A^{\otimes n} \rightarrow A^{\otimes (i-1)}\otimes ((A\otimes A) \otimes A^{\otimes (n-i-1)}),$$
we define it on transpositions by
$$
\widetilde{(i,i+1)}_A :=  \gamma_i^{-1}\circ (I\otimes (\bc_{A,A})\otimes I)) \circ \gamma_i
$$
and extend to the whole group: existence of such an extension follows from the axioms of symmetric monoidal category.
It gives a $\bK S_n$-module structure on
$\hom_\cC (A^{\otimes n}, B)$ 
for a pair of objects $A,B$ of $\cC$. 

\subsection{Extension of scalars}
Given a tensor category $\cC$ over $\bK$ and ring homomorphisms
$\bF\rightarrow\bK$ and
$\bF\rightarrow\bK^\prime$
of 
commutative rings, 
we can construct a new tensor category $\cC^\prime = \cC\otimes_{\bF}\bK^\prime$
over $\bK\otimes_{\bF}\bK^\prime$. It has the same objects as $\cC$ 
but new morphisms
$\hom_{\cC^\prime}(X,Y) := \hom_\cC (X,Y)\otimes_{\bF}\bK^\prime$.
The compositions are defined in the obvious way:
$(f\otimes \alpha) \circ (g \otimes \beta) = fg \otimes \alpha\beta$.
Similarly, all the natural transformations are extended to $\cC^\prime$.
All verifications are routine
and left to an interested reader.

\subsection{Tensor functors}
Let $\cC$ and $\cD$ be tensor categories over the same commutative ring $\bK$.
{\em A tensor functor } $F=(F_1,F_2,F_3): \cC \rightarrow \cD$
consists of a $\bK$-linear functor $F_1:\cC\rightarrow \cD$, a natural isomorphism
$F_{2,A,B}: FA \otimes FB \rightarrow F(A\otimes B)$ and an isomorphism $F_3 : \1_\cD \rightarrow F\1_\cC$.
They have to satisfy the standard axioms \cite{JoSt}.

\section{Algebras} 

\subsection{Algebras}
We can talk about algebras in $\cC$ over a $\bK$-linear operad or
even a $\bK$-linear prop\footnote{PROP is an abbreviation of ``product
  and permutation category'', introduced by MacLane.} \cite{HiVa}.
{\em A $\bK$-linear prop} is a tensor category $\cP$ over $\bK$ satisfying the following five conditions:
\begin{mylist}
\item[(1)] its objects form the monoid $(\bN,+)$, i.e. $n\otimes m = n+m$,
\item[(2)] it is strict, in this context it means that all $\ba_{n,m,k}=I_{n+m+k}$, $\bl_n=I_n$, and $\br_n=I_n$,
\item[(3)] for each object $n>0$ a semigroup embedding 
$S_n \hookrightarrow \hom_\cP (n,n)$, $\pi \mapsto \widetilde{\pi}$ is chosen,
\item[(4)] for all $\pi\in S_n$, $\sigma\in S_m$ their tensor product is another permutation $\tau\in S_{n+m}$:
$$
\widetilde{\pi}\otimes\widetilde{\sigma} = \widetilde{\tau}
\mbox{ where }
\tau (k) =  \left\{\!\!\begin{array}{lrr}\pi(k) & \mbox{ if } & k\leq n\\ \sigma(k-n)+n & \mbox{ if } & k> n  \end{array}\!\! \right.
,
$$
\item[(5)] the commutativity constraint is the shuffle $\sh\in S_{n+m}$:
$$
\bc_{n,m} = \wh
\mbox{ where }
\sh (k) =  \left\{\!\!\begin{array}{lrr}k+m & \mbox{ if } & k\leq n\\ k-n & \mbox{ if } & k> n  \end{array}\!\! \right. 
.
$$
\end{mylist}

Now {\em an algebra in } $\cC$ over a prop $\cP$ is a tensor functor $\cP\rightarrow \cC$.
This level of generality is not always useful as the prop needs to be constructed.
A more elementary approach
is more suitable for our purposes.
{\em A signature} $I=(I,h,t)$ is a set of operations 
$I$ 
with head and tail functions 
$h,t: I \rightarrow \bZ_{\geq 0}$, so that we think of $i\in I$
as an operation on $t(i)$ variables with $h(i)$ values.
{\em An algebra of signature}  $I$ 
is a triple 
$(\cC, A, (m_i, i\in I))$ where $\cC$ is a tensor category over a commutative ring 
$\bK$, $A$ is an object of $\cC$, and $m_i \in \hom_\cC (A^{\otimes t(i)}, A^{\otimes h(i)})$
is a family of morphisms. 
If $\bK$ or $\cC$ is fixed, we talk about $\bK$-algebras or $\cC$-algebras.
We specify a signature by listing the size of $I$ and pairs $(t(i),h(i))$, thinking
that $I=\{1,2, \ldots n\}$ and the $k$-th pair are the tail and head values of $m_k$.

Let us contemplate the connection between these two notions of algebra.
A prop $\cP$ admits a generating set of morphisms $I=\{i\,\mid\, i\in\hom_\cP (h(i),t(i))\}$.
An algebra $F:\cP\rightarrow \cC$ over a prop $\cP$ determines the algebra 
$(\cC, F_1(1), (F_1(i), i\in I))$ of the signature  $I=(I,h,t)$, determined by the generating set.
In the opposite direction, an algebra 
$(\cC, A, (m_i, i\in I))$ of the signature  $I=(I,h,t)$
may or may not determine an algebra over $\cP$.
In general, it determines only an algebra over a free prop $\cP (I)$.
The prop $\cP$ is a quotient of the free prop $\cP (I)$,
so to determine an algebra over $\cP$ 
the algebra must satisfy some axioms, coming from $\cP$.
Thus, the props provide an implicit method of requesting axioms on algebras, but
often it is more convenient to do explicitly.

In the partial case of  an algebra $A$ of signature $[1:(2,1)]$
the axioms are just polylinear identities.
Let 
$F=\bK\{x_1,...\}$ be the free nonassociative algebra in countably many variables,
$F_n$ the subset of polylinear elements of degree $n$, linear in fixed variables, say 
$x_1,\ldots x_n$.
There is a natural ``evaluation'' map of $\bK$-modules
$$ 
\psi_A : F_n \rightarrow  \hom_\cC (A^{\otimes n},A),
$$
defined recursively on monomials: 
$$ 
\psi (x_1) = I_A, \ 
\psi (x_1x_2) = m_1, \ 
\psi (v(x_{\sigma (1)}, \ldots x_{\sigma (k)} ) w(x_{\sigma (k+1)}, \ldots x_{\sigma (n)})) = 
m_1 \circ (\psi (v) \otimes \psi(w)) \circ \gamma \circ \widetilde{\sigma^{-1}}_A \ 
$$
where $v\in F_k, \; w\in F_{n-k}, \; \sigma \in S_n$
and
$\gamma: A^{\otimes n} \rightarrow A^{\otimes k} \otimes A^{\otimes (n-k)}$
is the composition of associativity constraints, and then extended to
linear combinations by
$\psi (\sum_i \alpha_i v_i) = \sum_i \alpha_i \psi (v_i)$.
Now we say that $A$ satisfies a polylinear identity $f$ if $\psi_A(f)=0$.


{\em A Lie algebra} is an algebra of signature $[1:(2,1)]$
satisfying the Jacobi and the anticommutativity identities:
$$
(x_1x_2)x_3+(x_2x_3)x_1+(x_3x_1)x_2 \ \mbox{ and } \ x_1x_2+x_2x_1,
$$
which can be written explicitly in the tensor notation as 
$$
m_1 \circ (m_1 \otimes I) \circ (\widetilde{1}+\widetilde{(2,3,1)}+\widetilde{(3,1,2)})=0
, \
m_1 \circ (\widetilde{1}+\widetilde{(1,2)})=0.
$$
Notice that if $\cC$ is the category of $\bK$-modules
and $\frac{1}{2} \in \bK$ then our definition is just the usual definition of a Lie algebra over $\bK$.
However, if $\frac{1}{2} \not\in \bK$, our anticommutativity is weak, i.e., $xy +yx =0$ but we cannot conclude that
$x^2=0$.

{\em An associative algebra} is an algebra of signature $[2:(2,1),(0,1)]$
satisfying the associativity 
$
(x_1x_2)x_3 
= 
x_1(x_2x_3)
$
and the left and right unities. The latter cannot be written in terms of $\psi_A$
so we recourse to the tensor notation formulating the axioms: 
$$
m_1 \circ (m_1 \otimes I) = m_1 \circ (I \otimes m_1) \circ \ba_{A,A,A}
, \
m_1 \circ (m_2 \otimes I) = \bl_A 
, \
m_1 \circ (I \otimes m_2) = \br_A .
$$


{\em A metric object} is an algebra of signature $[2:(0,2),(2,0)]$
such that both structures are symmetric:
$$
m_2 = m_2 \circ \bc_{A,A}, \ \ 
m_1 = \bc_{A,A} \circ m_1,
$$ and 
$A$ is a dual object of $A$ in the sense of monoidal categories, i.e.,
the identity
$$
I_A = \br_A \circ (I_A \otimes m_2) \circ \ba_{A,A,A} \circ (m_1\otimes I_A) \circ \bl_A^{-1}
$$
holds. Notice that $m_2$ is ``non-degenerate'' in this case.
If 
$(A^\ast, u\in \hom (A\otimes A^\ast, \1), c\in \hom (\1, A^\ast\otimes A))$
is another dual object, then
$$
A \xrightarrow{\bl_A^{-1}} \1 \otimes A
\xrightarrow{c\otimes I} (A^\ast \otimes A)\otimes A 
\xrightarrow{\ba_{A^\ast,A,A}}  A^\ast \otimes (A\otimes A) 
\xrightarrow{I\otimes m_2} A^\ast \otimes \1
\xrightarrow{\br_{A^\ast}} A^\ast
$$ 
is a canonical isomorphism.
Following Hinich and Vaintrob \cite{HiVa}, we are introducing Casimir and metric Lie algebras in the next section. 

\subsection{Representations}
Let $(\cC, \fg, m)$ be a Lie algebra.
{\em A representation} of $\fg$ is a pair $(M,\rho)$ where $M$ is an object in $\cC$,
$\rho\in\hom_\cC (\fg\otimes M, M)$ and the Jacobi identity
$$
\rho \circ (m\otimes I_M) = \rho \circ (I_\fg\otimes \rho) \circ \ba_{\fg,\fg,M} 
-   \rho \circ (I_\fg\otimes \rho) \circ \ba_{\fg,\fg,M} \circ (\bc_{\fg,\fg}\otimes I_M) 
\in\hom_{\cC}((\fg\otimes\fg)\otimes M, M)
$$
holds. Completely parallel to the case of Lie algebras in vector spaces,
the representations form a tensor category $\moC (\fg)$.
The morphisms $\hom_{\fg} (M,N)$ are those morphisms in $\hom_{\cC} (M,N)$
that commute with the action of $\fg$.
The tensor product is the tensor product in $\cC$ 
with the usual action:
$$
\rho_{M\otimes N} = 
(\rho_M \otimes I_N)\circ \ba_{\fg,M,N}^{-1} +  
(I_M \otimes \rho_N) \circ \ba_{M,\fg,N} \circ (\bc_{\fg,M}\otimes I_M)\circ \ba^{-1}_{\fg,M,N} 
\in\hom_{\cC}(\fg\otimes (M\otimes N), M\otimes N).
$$
The constraints are inherited from $\cC$. Let us summarise this in a proposition whose proof is left to an interested reader.
\begin{prop}
Let $\fg$ be a Lie $\cC$-algebra.
Then the category $\moC (\fg)$ is a tensor category over $\bK_\cC$.
Its identity object is $\1_{\moC (\fg)}=\1_\cC$ with zero action.
The multiplication in $\fg$ is a morphism in $\moC (\fg)$,
so $\fg$ is a Lie $\moC (\fg)$-algebra.
\end{prop}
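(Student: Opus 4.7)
The plan is to verify the $\bK_\cC$-linear tensor category axioms for $\moC(\fg)$ in an order that disposes of the routine parts first, and then deduce the final clause about $\fg$ itself. The preliminary checks are clear: the identity morphism of every $M\in\cC$ is $\fg$-equivariant, a composition of equivariant morphisms is equivariant by pasting the commuting squares, and $\hom_\fg(M,N)\subseteq\hom_\cC(M,N)$ is a $\bK_\cC$-submodule because the equivariance equation $\rho_N\circ(I_\fg\otimes f)=f\circ\rho_M$ is linear in $f$.

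The substantive work lies in the tensor structure. I would verify in turn: (i) the prescribed formula for $\rho_{M\otimes N}$ defines a representation of $\fg$, i.e., satisfies the representation Jacobi identity; (ii) $f\otimes g$ is equivariant whenever $f$ and $g$ are; and (iii) each structural isomorphism $\ba_{A,B,C}$, $\bc_{A,B}$, $\bl_A$, $\br_A$ of $\cC$ is $\fg$-equivariant with respect to the induced tensor actions. For $\bl_A$ and $\br_A$ the key point is that $\1_\cC$ carries the zero action, so in the two-summand expression for $\rho_{\1\otimes A}$ only the ``$(I\otimes\rho_A)$'' term survives and matches $\rho_A$ under $\bl_A$. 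Checks (ii) and (iii) reduce to naturality of the constraints in $\cC$ and to symmetric-monoidal coherence. The zero action on $\1_\cC$ also makes the $\moC(\fg)$-endomorphism ring of $\1_\cC$ equal to $\End(\1_\cC)=\bK_\cC$ (any endomorphism of $\1_\cC$ trivially commutes with the zero action), so the earlier proposition supplies the base ring $\bK_{\moC(\fg)}=\bK_\cC$.

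The main obstacle is step (i): expanding the representation Jacobi identity for $\rho_{M\otimes N}$ produces several terms whose vanishing requires Jacobi for $\rho_M$ and $\rho_N$ on the ``uncrossed'' contributions, together with Jacobi for $\fg$ itself, mediated by $\bc_{\fg,\fg}$, on the ``crossed'' contribution in which the two copies of $\fg$ act on different tensor factors and must be rebalanced. The various associativity constraints are then absorbed by coherence. This is a long but mechanical diagram chase.

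For the last clause I would observe that the adjoint action $\rho_\fg:=m$ is itself a representation: the representation Jacobi with $\rho=m$ is a repackaging of Jacobi plus anticommutativity in $\fg$, namely the tensor rewrite of $[[a,b],c]=[a,[b,c]]-[b,[a,c]]$ as $m\circ(m\otimes I_\fg)=m\circ(I_\fg\otimes m)\circ\ba_{\fg,\fg,\fg}\circ(I_{\fg\otimes\fg}-(\bc_{\fg,\fg}\otimes I_\fg))$. Equivariance of $m:\fg\otimes\fg\to\fg$ with respect to this adjoint action and the induced tensor action on $\fg\otimes\fg$, after precomposition with $\ba_{\fg,\fg,\fg}$, reduces to the same identity, so $m$ is a morphism in $\moC(\fg)$. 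Finally, the Jacobi and anticommutativity axioms for $m$ as a $\moC(\fg)$-algebra multiplication are literally the equations it satisfies in $\cC$, so they transfer automatically and $\fg$ becomes a Lie $\moC(\fg)$-algebra.
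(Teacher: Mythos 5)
The paper does not actually supply a proof of this proposition --- it explicitly leaves the verification ``to an interested reader'' --- so there is nothing to diverge from. Your proposal is a correct and complete outline of the standard verification (representation Jacobi for $\rho_{M\otimes N}$, equivariance of $f\otimes g$ and of the constraints, $\End_{\moC(\fg)}(\1_\cC)=\bK_\cC$ via the zero action, and the reading of the adjoint action and of the equivariance of $m$ as rearrangements of the Jacobi identity), and it is exactly the argument the author intends the reader to carry out.
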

One could ask what sort of tensor categories one gets by iteration: consider
representations of $\fg$ in $\moC (\fg)$, etc. The answer is rather uninteresting:
the reader can easily verify that 
subject to the existence of finite direct sums in $\cC$, 
the category of representations of $\fg$ in $\moC (\fg)$
is canonically equivalent to the category of representations of the semidirect product $\fg \ltimes \fg_a$ in $\cC$
(where $\fg_a$ is the adjoint representation of $\fg$ treated as a Lie
algebra with the zero Lie product).

Now {\em a Casimir Lie algebra}  
is a $\cC$-algebra of signature $[2:(2,1),(0,2)]$
such that $m_1$ defines a Lie algebra while $m_2$ is a symmetric 
morphism
in the category of $\fg$-modules.

Similarly {\em a metric Lie algebra} is a $\cC$-algebra $\fg$ of signature $[3:(2,1),(0,2),(2,0)]$
such that $(\fg, m_1)$ is a Lie $\cC$-algebra, 
and $(\fg, m_2, m_3)$ is a metric $\moC (\fg)$-object.

\subsection{Homomorphisms}
Naturally there are two notions of a homomorphism between algebras of the same signature. 
{\em A homomorphism from} $(\cC,A, m_i)$ {\em to} $(\cC,B, m^\prime_i)$
is a morphism $\varphi\in \hom_{\cC} (A,B)$ which commutes with all multiplications, i.e.
the diagram
$$
\begin{CD} 
{A^{\otimes h(i)}} @> {\varphi^{\otimes h(i)}} >> {B^{\otimes h(i)}} \\ 
@V{m_i} VV @VV {m^\prime_i} V \\ 
{A^{\otimes t(i)}} @>>{\varphi^{\otimes t(i)}}> {B^{\otimes t(i)}} \\ 
\end{CD} 
$$
is commutative
for all $i$. 

To avoid confusion we use the term specialization for the second notion.
{\em A specialization from}
$(\cC,A, m_i)$ {\em to} $(\cD,B, m^\prime_i)$
is a triple $\psi=(\psi_1, \psi_2, \psi_3)$ where $\psi_1 :\bK_\cC \rightarrow \bK_\cD$ 
is a ring homomorphism,
$\psi_2: \cC\otimes_{\bK_\cC} \bK_D \rightarrow \cD$ is a tensor functor, 
and $\psi_3$ is a homomorphism from $\psi_2 (A)$ to $B$.
A specialization such that $\psi_3$ is an isomorphism is called {\em
  isospecialization}.

Notice that we may consider the three types of homomorphisms (or specializations)
between metric Lie algebras: a Lie algebra homomorphism, 
a Casimir Lie algebra homomorphism, and 
a metric Lie algebra homomorphism.
The difference is in the preserved operations.

\subsection{Identities of algebras in categories}
Varieties of algebras is a topic of an active study in Algebra \cite{BaOl}.
Should one study varieties of algebras in categories?
In this subsection we make some easy observations 
and pose some questions.

At the first glance, at least if $\bK$ is a $\bQ$-algebra,
the varieties of algebras in categories are not much different
from the varieties of usual algebras. Indeed, an interested reader could
easily verify that the identities of a $\cC$-algebra 
is equal to the set of polylinear elements in some $T$-ideal (an ideal closed under substitutions)
of
the free algebra $\bK\{x_1,...\}$.
For instance, the following Engel-type theorem is an immediate consequence
of the same theorem for usual Lie algebras \cite[Th 6.4.1]{Kost}.
\begin{theorem}
Suppose $\bK$ is a commutative $\bQ$-algebra, $\cC$ is a tensor category over $\bK$, 
and $\fg$ is a Lie $\cC$-algebra
satisfying the linearised Engel identity. Then $\fg$ satisfies the linearised nilpotency.
\end{theorem}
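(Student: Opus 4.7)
The plan is to reduce the statement to the classical Engel theorem by exploiting the preceding observation that, for a commutative $\bQ$-algebra $\bK$, the polylinear identities satisfied by a Lie $\cC$-algebra are closed under the operations of a $T$-ideal of the free nonassociative algebra $F=\bK\{x_1,x_2,\ldots\}$.

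Concretely, let $J(\fg)\subseteq F$ be the $T$-ideal generated by the Jacobi identity, the anticommutativity identity, and all polylinear elements $f\in F_n$ with $\psi_\fg(f)=0$. By hypothesis, the linearised Engel identity $\mathrm{En}$ lies in $J(\fg)$. It suffices to show that some linearised nilpotency identity $\mathrm{Nil}_N$ lies in $J(\fg)$, because $\mathrm{Nil}_N$ is polylinear and the polylinear elements of $J(\fg)$ are identities of $\fg$.

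For this, form the relatively free ordinary Lie $\bK$-algebra $L$ on countably many generators in the variety of Lie $\bK$-algebras defined by $\mathrm{En}$. Since $\bK$ is a commutative $\bQ$-algebra, $L$ is an ordinary Lie algebra satisfying the linearised Engel identity, so by [Kost, Th 6.4.1] it satisfies $\mathrm{Nil}_N$ for some $N$. Equivalently, $\mathrm{Nil}_N$ lies in the $T$-ideal of $F$ generated by $\mathrm{En}$ together with Jacobi and anticommutativity. Since $J(\fg)$ contains these generators, $\mathrm{Nil}_N\in J(\fg)$, and hence $\fg$ satisfies $\mathrm{Nil}_N$.

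The main obstacle is to justify rigorously the bracketed claim preceding the theorem: that the polylinear identities of a $\cC$-algebra are exactly the polylinear part of a $T$-ideal of $F$. Concretely, one must verify that if $f\in F_n$ vanishes under $\psi_\fg$, then every polylinear substitution $f(g_1,\ldots,g_n)$ (with the $g_i$ polylinear in disjoint variables) also vanishes. This follows by unravelling the recursive definition of $\psi_\fg$: a polylinear substitution corresponds, under $\psi_\fg$, to precomposing $\psi_\fg(f)$ with the tensor product of the morphisms $\psi_\fg(g_i)$ reordered by a permutation assembled from associativity constraints and braidings, hence vanishes as soon as $\psi_\fg(f)$ does. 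Once this compatibility is in place, the reduction above is entirely formal.
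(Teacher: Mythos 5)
Your proposal is correct and follows exactly the route the paper intends: the paper offers no detailed argument, merely asserting that the theorem is an ``immediate consequence'' of the classical linearised Engel theorem \cite[Th 6.4.1]{Kost} once one knows that the polylinear identities of a $\cC$-algebra form the polylinear part of a $T$-ideal of $\bK\{x_1,\ldots\}$. You have simply spelled out that reduction (including the compatibility of $\psi_\fg$ with polylinear substitution, which the paper leaves to ``an interested reader''), so your write-up is a faithful, more detailed version of the paper's own argument.
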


%
%
%

Nevertheless, in our view, it is interesting 
to understand examples of $\cC$-algebras,
in particular, what identities they satisfy.
For instance, if $X$ is a rigid object in a tensor category $\cC$
then $E_X=X\otimes X^\ast$ is an associative $\cC$-algebra, so-called
{\em the internal endomorphism algebra}.
The identities of
$E_X$ in the free associative algebra $\bK <x_1,x_2 \ldots>$ 
form a $T$-ideal $I(E_X)\lhd \bK <x_1,x_2 \ldots>$.
It is interesting to study it \cite{BaOl}.
\begin{que}
Given a rigid object $X$ in $\cC$, what is the minimal degree
of an element of $I(E_X)$?
\end{que}

If $\cC$ is the category of vector  spaces, $X$ is an $n$-dimensional vector space then
$E_X$ is the algebra of $n\times n$ matrices,
whose minimal identity has degree $2n$ by the Amitsur-Levitzki theorem \cite{AmLe}.
If $\cC$ is the category of supervector  spaces
(this is the same as superbundles on a point, see Section~\ref{superbundle}), 
$X$ is an $(n,k)$-dimensional supervector space then
$E_X$ is the matrix superalgebra
whose minimal identity 
is conjectured to be of 
the degree $2(nk + n + k) - \min\{ n, k \}$ \cite{Samo}.

One reason the matrix superalgebras are of interest is that
in characteristic zero they generate all prime varieties 
of associative algebras \cite{Keme}. Is it conceivable that 
algebras $E_X$ generate all varieties?
\begin{que}
Give an example of a commutative $\bQ$-algebra $\bK$ and a $T$-ideal in $\bK <x_1,...>$
that is not an ideal of identities of $E_X$ for any rigid object $X$ in any tensor category $\cC$ over $\bK$.
\end{que}

We finish this section explaining the categorical analogue of the Grassmann envelope
used to study the varieties of superalgebras.
{\em The global sections functor} $\Gamma : \cC\rightarrow \mod(\bK_\cC)$ is defined by
$\Gamma (A) := \hom_\cC (\1, A)$.
If $A$ is an algebra in $\cC$ then
$\Gamma (A)$ is a $\bK_\cC$-algebra in 
$\bK_\cC$-modules.
The following proposition is evident. 
\begin{prop}
\label{decat_alg}
Suppose $\cC$ is a tensor category, 
$A$ is a $\cC$-algebra, $C$ is an associative commutative $\cC$-algebra.
Then $\Gamma (A\otimes C)$ is a $\mod(\bK_\cC)$-algebra of the same 
signature as $A$. Moreover, 
$\Gamma (A\otimes C)$ satisfies all identities of $A$.
\end{prop}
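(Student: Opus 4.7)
The plan is two-fold: (i) build the algebra structure on $\Gamma(A\otimes C)$ induced by the operations of $A$, and (ii) verify that it satisfies every polylinear identity of $A$. I focus on operations with $h(i)\le 1$ (which covers Lie, associative, and Casimir algebras, and with them the whole identity calculus of the paper); the cases $h(i)\ge 2$ require only the evident variant of the same bookkeeping. Writing $\mu:C\otimes C\to C$ and $\eta:\1\to C$ for the multiplication and unit of $C$, I define each induced operation $m_i'$ as the composite
\[
\1\xrightarrow{\sim}\1^{\otimes t(i)}\xrightarrow{f_1\otimes\cdots\otimes f_{t(i)}}(A\otimes C)^{\otimes t(i)}\xrightarrow{\Sigma_{t(i)}} A^{\otimes t(i)}\otimes C^{\otimes t(i)}\xrightarrow{m_i\otimes\nu_i} A^{\otimes h(i)}\otimes C^{\otimes h(i)}\xrightarrow{\Sigma_{h(i)}^{-1}}(A\otimes C)^{\otimes h(i)},
\]
where $\Sigma_n$ is the canonical ``separation'' isomorphism built from $\ba$ and $\bc$ that migrates all $A$-factors to the left and all $C$-factors to the right, and $\nu_i:C^{\otimes t(i)}\to C^{\otimes h(i)}$ is the morphism obtained by iterating $\mu$ down to a single $C$-factor and then inserting unit copies via $\eta$; the associativity and commutativity of $C$ force $\nu_i$ to be independent of every auxiliary choice. $\bK_\cC$-multilinearity is automatic since everything downstream of the pure tensor $f_1\otimes\cdots\otimes f_{t(i)}$ is composition with fixed morphisms of $\cC$.

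The technical heart is then the formula, for every polylinear monomial $v\in F_n$ and every $f_1,\ldots,f_n\in\Gamma(A\otimes C)$,
\[
\psi_{\Gamma(A\otimes C)}(v)(f_1,\ldots,f_n)=\bigl(\psi_A(v)\otimes\mu^{(n)}\bigr)\circ\Sigma_n\circ(f_1\otimes\cdots\otimes f_n),
\]
where $\mu^{(n)}:C^{\otimes n}\to C$ is the (canonical, by associativity-commutativity) iterated multiplication. I prove this by induction on the depth of $v$. The base case $v=x_1$ is immediate from the unit axioms of $C$. In the inductive step $v=u(x_{\sigma(1)},\ldots,x_{\sigma(k)})\cdot w(x_{\sigma(k+1)},\ldots,x_{\sigma(n)})$, the inductive hypothesis applied to $u$ and $w$ produces two separated forms, and the two resulting $C$-products merge into a single $\mu^{(n)}$ by associativity of $C$, while the permutation $\sigma$ is absorbed into $\Sigma_n$ using commutativity of $\mu$ together with naturality of $\bc$. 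Extending $\bK$-linearly, the $A$-factor of the resulting morphism is exactly $\psi_A(f)$ for any polylinear $f\in F_n$, so $\psi_A(f)=0$ forces $\psi_{\Gamma(A\otimes C)}(f)=0$, establishing the final assertion.

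The one nontrivial point is the inductive identity above: one has to keep a careful ledger of associators and symmetries to check that the $A$- and $C$-halves separate cleanly at every step. The coherence theorem for symmetric monoidal categories takes care of the uniqueness of ``natural'' rearrangements, so $\Sigma_n$ is canonical; the genuine work is verifying the interaction between the permutation $\sigma$ appearing in $\psi$ and the $C$-multiplication, where the commutativity of $\mu$ (which makes $\mu^{(n)}$ $S_n$-invariant) is used essentially. Everything else is formal manipulation of tensor-category morphisms.
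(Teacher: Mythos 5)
The paper offers no proof of this proposition at all---it is introduced with the sentence ``The following proposition is evident''---so your write-up is not an alternative route but the only actual argument on record, and it is essentially correct. Your two-step plan (transport the operations through the separation isomorphisms $\Sigma_n$, then prove by induction on monomials that $\psi_{\Gamma(A\otimes C)}(v)$ factors as $\psi_A(v)\otimes\mu^{(n)}$ after separation) is exactly the standard way to make the claim precise, and the two ingredients you isolate---commutativity of $C$ to absorb the permutation $\widetilde{\sigma^{-1}}$ on the $C$-side, associativity of $C$ to collapse the nested products into a single $\mu^{(n)}$---are indeed where the hypotheses on $C$ enter. Since the paper's notion of ``identity'' via $\psi_A$ is only set up for signature $[1:(2,1)]$, your key formula fully establishes the substantive second assertion.

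One caveat deserves naming. Your parenthetical claim that $h(i)\geq 2$ ``requires only the evident variant of the same bookkeeping'' is too optimistic, and even your stated scope $h(i)\leq 1$ has a wrinkle at $h(i)=0$. The functor $\Gamma$ is only lax monoidal: there is a canonical map $\Gamma(X)\otimes\Gamma(Y)\to\Gamma(X\otimes Y)$ but none in the opposite direction, so for a co-operation $m_i:A^{\otimes t(i)}\to A^{\otimes h(i)}$ with $h(i)\geq 2$ the composite you build lands in $\Gamma\bigl((A\otimes C)^{\otimes h(i)}\bigr)$ and cannot be pushed canonically into $\Gamma(A\otimes C)^{\otimes h(i)}$. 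Likewise, for $h(i)=0$ your recipe for $\nu_i$ would need a counit $C\to\1$, which a commutative associative $\cC$-algebra need not have; the natural target is then $\Gamma(C)$ rather than $\bK_\cC=\Gamma(\1)$. So the ``same signature'' clause is genuinely clean only when every head equals $1$ (which covers Lie algebras, associative algebras, and the Grassmann-envelope application the paper actually uses); this is a limitation of the proposition as literally stated, not a defect introduced by your proof, but it should be flagged rather than deferred to ``evident bookkeeping.''
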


The Grassmann algebra $G_\infty := \bK <x_1,x_2 \ldots>/(x_ix_j+x_jx_i)$ in countably many variables 
is an associative commutative algebra in the category $\cC$ of supervector spaces.
If $A$ is a superalgebra then $\Gamma (A\otimes G_\infty)$
is the Grassmann envelope of $A$.
The identities of $\Gamma (A\otimes G_\infty)$
are precisely the identities of $A$ in $\cC$.
Can this be carried out in any category?
\begin{que}
Let $\cC$ be a cocomplete\footnote{closed under arbitrary direct sums.}
tensor category with split idempotents\footnote{every idempotent
  $e\in\hom_\cC (X,X)$
is the projection operator $X\xrightarrow{\cong} Y\oplus Z \rightarrow
Y \hookrightarrow X$.}.
Does there exist a commutative associative algebra $G$ in $\cC$
such that for any $\cC$-algebra $A$ all identities of  $\Gamma (A\otimes G)$ hold in $A$?
\end{que}

%

\section{Simple Lie algebras} 

\subsection{Quasisimple Lie algebras}
Following Vogel \cite{Vog1,Vog2},
a Lie algebra $(\cC,\fg,m_1)$ is
{\em quasisimple}
if it can be extended to a metric Lie algebra $(\cC,\fg,m_1,m_2,m_3)$
and 
the natural map $\bK_\cC \rightarrow \hom_{\fg} (\fg,\fg)$ is an isomorphism.
We discuss how it is related to
the usual simplicity in the next section.

\subsection{Simple Lie algebras}
If the $\bK$-tensor category $\cC$ is abelian, 
we can talk about subobjects, quotients, kernels, ideals, etc.
in the usual way. In particular, 
there is the usual notion of a simple Lie algebra.
However, the category being abelian is not required to introduce simplicity,
although the utility of this notion is unclear.
Neither is it clear whether this definition is useful for other classes of algebras.

Recall that a morphism $f\in\hom_\cC (X,Y)$ is {\em a monomorphism}
if it can be cancelled on the left, i.e., $fa=fb$ implies $a=b$
for all $a,b\in \hom_\cC (W,X)$.
A Lie algebra $\fg$ in $\cC$ is {\em simple} 
if $\fg\not\cong 0$ (notice that if a zero object exists, it is unique up to a canonical isomorphism)
and 
every 
algebra homomorphism $f:\fg\rightarrow\fh$
is a monomorphism or the zero.
Notice the fine difference 
with the usual simple Lie
algebras in vector spaces: the latter definition traditionally excludes the one-dimensional Lie algebra while our definition does not.

In general, there is no relation between simple and quasisimple Lie algebras.
Let
$\bK$ be a commutative ring, not a field, $1/2\in\bK$.
It admits a proper ideal $I$.
Then the Lie algebra $\sl_2(\bK)$ (in the category of $\bK$-modules)
is quasisimple: the trace form $m_2 (X\otimes Y):=\mbox{Tr}(XY)$
gives a metric structure with a coform
$$
m_3 (\alpha) := \alpha (
\lb \begin{array}{cc} 
0 & 1
\\ 
0 & 0
\end{array}\rb
\otimes 
\lb \begin{array}{cc} 
0 & 0
\\ 
1 & 0
\end{array}\rb
+
\lb \begin{array}{cc} 
0 & 0
\\ 
1 & 0
\end{array}\rb
\otimes 
\lb \begin{array}{cc} 
0 & 1
\\ 
0 & 0
\end{array}\rb
+\lb \begin{array}{cc} 
1 & 0
\\ 
0 & -1
\end{array}\rb
\otimes 
\lb \begin{array}{cc} 
\frac{1}{2} & 0
\\ 
0 & -\frac{1}{2}
\end{array}\rb
).$$
It is not simple because the quotient homomorphism $\sl_2 (\bK)\rightarrow\sl_2 (\bK/I)$
is neither zero, nor a monomorphism. 

Here is another example of a quasisimple non-simple 
Lie algebra.
Let $X$ be a set of simple finite dimensional complex Lie algebras,
$C=\bC X$ the group coalgebra.
A $C$-comodule is just an $X$-graded vector space $M=\oplus_{x\in X} M_x$.
We consider the tensor category $\cC$ of $C$-comodules 
with the standard tensor coproduct as a tensor product: $M\otimes N:= \oplus_{x\in X} (M_x \otimes_\bC N_x)$.
Now
$\fg = \oplus_{\fh\in X} \fh$
is a metric Lie $\cC$-algebra under
the direct sums of the products, the forms and the coforms 
$$
\bigoplus_{\fh\in X} m_\fh : \oplus (\fh\otimes\fh)\rightarrow \oplus \fh, \
\bigoplus_{\fh\in X} m_\fh : \oplus (\fh\otimes\fh)\rightarrow C, \
\bigoplus_{\fh\in X} m_\fh : C\rightarrow\oplus(\fh\otimes\fh)
.
$$
Observe that $\fg$ is quasisimple but, if $X$ contains at least two elements, not simple
since every $\fh$ is a proper ideal of $\fg$ resulting in the quotient homomorphism.

In the opposite direction,
$\sl_2(\bC)$ is a simple Lie algebra in the category of real
vector spaces but it is not quasisimple because
$\hom_{\fg} (\fg,\fg) = \bC \neq \bR$.
Nevertheless, sometimes there is a relation.
\begin{theorem} 
\label{simple_in_vs}
Let $\cC$ be the category of vector spaces over an algebraically closed
field $\bK$ of characteristic zero.
Then quasisimple Lie $\cC$-algebras are exactly the finite dimensional 
simple\footnote{Notice that this includes the one-dimensional Lie algebra according to our definition}
Lie algebras.
\end{theorem}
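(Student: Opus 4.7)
I would establish the two implications separately, with the reverse one doing most of the work.

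\emph{Simple $\Rightarrow$ quasisimple.} If $\dim_\bK\fg=1$ with zero bracket, any nonzero form $m_2$ is automatically $\fg$-invariant and $\hom_\fg(\fg,\fg)=\hom_\bK(\fg,\fg)=\bK$. If $\dim_\bK\fg\geq 2$, I would take $m_2$ to be the Killing form (nondegenerate by Cartan's criterion applied to a simple Lie algebra) with $m_3$ its dual bivector; the adjoint representation is irreducible because its submodules are the ideals of $\fg$, so Schur's lemma over the algebraically closed field $\bK$ yields $\hom_\fg(\fg,\fg)=\bK$.

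\emph{Quasisimple $\Rightarrow$ simple.} First, $\fg$ is finite dimensional: the metric structure makes $\fg$ a dual object of itself in $\mathrm{Vec}_\bK$, and only finite dimensional spaces are dualizable there. The key identification is that the space of $\fg$-invariant bilinear forms on $\fg$ equals $\hom_\fg(\fg\otimes\fg,\bK)\cong\hom_\fg(\fg,\fg^*)$, which under the isomorphism $\fg\cong\fg^*$ induced by $m_2$ becomes $\hom_\fg(\fg,\fg)=\bK$; thus invariant bilinear forms on $\fg$ form a one-dimensional space, and the Killing form $K$ (being invariant) satisfies $K=c\cdot m_2$ for some $c\in\bK$.

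If $c\neq 0$, then $K$ is nondegenerate, so $\fg$ is semisimple by Cartan's criterion; a decomposition $\fg=\fs_1\oplus\cdots\oplus\fs_k$ into simple summands with $k\geq 2$ would supply non-scalar projection endomorphisms in $\hom_\fg(\fg,\fg)$, a contradiction, so $k=1$ and $\fg$ is classically simple. If $c=0$, then $K=0$, so $\fg$ is solvable (Cartan); thus $[\fg,\fg]\subsetneq\fg$, and invariance of $m_2$ gives $Z(\fg)=[\fg,\fg]^\perp\neq 0$. Choosing $0\neq z\in Z(\fg)$ and any nonzero $\phi\in\fg^*$ vanishing on $[\fg,\fg]$, the rank-one map $T(x)=\phi(x)z$ is a nonzero $\fg$-equivariant endomorphism of $\fg$ which is a scalar multiple of the identity only when $\dim_\bK\fg=1$; hence $\fg$ is one dimensional.

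The main obstacle is the solvable case: one must coordinate the nondegeneracy of $m_2$ (to locate a central element via $Z(\fg)=[\fg,\fg]^\perp$) with the Schur-type condition (to exclude every dimension above one), using the identification of invariant bilinear forms with $\hom_\fg(\fg,\fg)$ as the bridge between the two hypotheses.
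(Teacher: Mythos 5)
Your proof is correct, and while the forward direction coincides with the paper's (Killing form plus Schur's lemma, with the one-dimensional algebra treated separately --- a case the paper's own argument silently skips), the reverse direction takes a genuinely different route. The shared pivot is the identification of invariant bilinear forms with $\hom_\fg(\fg,\fg)=\bK$, forcing $K=c\,m_2$; but from there the paper simply invokes the theorem of Bajo and Benayadi on Lie algebras with a unique quadratic structure, whereas you give a self-contained case analysis: for $c\neq 0$ you get semisimplicity from Cartan's criterion and kill extra simple summands with their non-scalar projections, and for $c=0$ you get solvability, extract a central element from $Z(\fg)=[\fg,\fg]^\perp\neq 0$, and build an explicit rank-one equivariant endomorphism $T(x)=\phi(x)z$ to force $\dim\fg=1$. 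The paper does sketch a second, Cartan-based proof via its Proposition on quasisimple superalgebras, but that version argues through perfectness (``$\fg$ is not solvable'') and therefore again does not honestly account for the one-dimensional abelian algebra that the theorem's footnote explicitly includes; your treatment of the solvable branch handles it cleanly. What your approach buys is independence from the Bajo--Benayadi reference and a uniform argument covering the degenerate one-dimensional case; what it costs is a slightly longer case analysis. Your explicit remark that the metric structure forces finite-dimensionality (dualizable objects of $\mathrm{Vec}_\bK$ are finite dimensional) fills a step the paper leaves implicit.
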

\begin{proof}
A finite dimensional simple Lie algebra $\fg$
is quasisimple because the Killing form is non-degenerate and
$\bK\rightarrow \Endd_\fg(\fg)$ 
is an isomorphism by the Schur lemma and algebraic closeness of $\bK$.

In the opposite direction, the space of symmetric invariant form $\fg\otimes\fg\rightarrow\bK$
is one-dimensional since it is a subspace of
$(\fg^*\otimes \fg^*)^\fg \cong \hom_\fg(\fg,\fg)=\bK$.
By the theorem of Bajo and Benayadi \cite{BaBe},
$\fg$ is simple.
\end{proof}

It would be interesting to find some other categories where 
an analogue of Theorem~\ref{simple_in_vs}
holds or where the quasisimple Lie algebras admit a meaningful classification.
Let us pose two precise questions.
\begin{que}
Let $G$ be an affine group scheme over 
a field $\bK$ of characteristic zero.
Let $\cC$ be the category of rational $G$-modules.
What are simple and quasisimple Lie algebras in $\cC$?
\end{que}

To get a feel of this question, 
let us consider an absolutely irreducible finite dimensional $G$-module $V$ as a Lie algebra
with the zero multiplication in $\cC$. According to our definition, $V$ is simple.
On the other hand, $V$ is quasisimple if and only if $V\cong V^\ast$.
\begin{que}
Let $\cC$ be the category of supervector spaces\footnote{It is
  equivalent to
the category of rational $C_2$-modules as a monoidal category, but not
as a tensor category.}
over an algebraically closed
field $\bK$ of characteristic zero.
Is a quasisimple Lie $\cC$-algebra a finite dimensional 
simple
Lie superalgebra?
\end{que}

Finite dimensional simple Lie superalgebras
are classified by Kac \cite{Kac}.
Simple Lie algebras and simple Lie superalgebras of types
$A(m, n)$, $n\neq m$, $B(m, n)$, $C(n)$, $D(m, n)$, $m-n\neq 1$, $F(4)$, and $G(3)$
have nondegenerate Killing forms
\cite[Th. 1]{Kac}.
Hence, they are quasisimple Lie $\cC$-algebras.
The remaining simple Lie superalgebras of types $A(n,n)$, $D(n + 1, n)$, $P(n)$, $Q(n)$, and
$D(2, 1, \alpha)$ 
as well as Cartan type superalgebras
have zero Killing forms \cite[Prop. 2.4.1]{Kac},
although there may be an invariant form distinct from the Killing form.
For instance, this happens in types $Q(n)$
and $D(2, 1, \alpha)$.
Her is a partial result towards this question.
\begin{prop}
\label{qs_super}
Let $\fg$ be a quasisimple Lie algebra
in the category of supervector spaces over a 
field $\bK$.  
Then 
\begin{mylist}
\item[(1)] $\fg$ is perfect,
\item[(2)] the centre of $\fg$ is zero,
\item[(3)] every minimal ideal of $\fg$ is abelian.
\end{mylist}
\end{prop}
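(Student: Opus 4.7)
The plan is to exploit the self-duality coming from the metric structure throughout. The invariant nondegenerate symmetric form $m_2$ induces an isomorphism $\fg \cong \fg^*$ of $\fg$-modules, so for every subobject $\mathfrak{a} \subseteq \fg$ one has the orthogonal complement $\mathfrak{a}^\perp$ of complementary superdimension, and invariance shows $\mathfrak{a}^\perp$ is an ideal whenever $\mathfrak{a}$ is. The key identity, immediate from invariance, is $[\fg,\fg]^\perp = Z(\fg)$, where $Z(\fg)$ is the centre (the kernel of the adjoint map $\fg \to \fg \otimes \fg^*$): indeed $x \in [\fg,\fg]^\perp$ iff $m_2([x,y],z) = 0$ for all $y,z$ iff $[x,y] = 0$ for all $y$. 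Nondegeneracy of $m_2$ already forces the equivalence of (1) and (2).

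To establish (2), assume $Z(\fg) \neq 0$, so that $\fg/[\fg,\fg] \cong Z(\fg)^*$ is also nonzero. Both are trivial $\fg$-modules of matching superdimensions, and any even linear isomorphism between them yields a nonzero $\fg$-equivariant composition $\phi\colon \fg \twoheadrightarrow \fg/[\fg,\fg] \xrightarrow{\sim} Z(\fg) \hookrightarrow \fg$. By quasisimplicity $\phi = \lambda I_\fg$ for some $\lambda \in \bK$, and since $\phi$ factors through the proper subobject $Z(\fg)$ we conclude $\lambda = 0$, contradicting $\phi \neq 0$. The only residual possibility $Z(\fg) = \fg$ makes $\fg$ abelian, whereupon $\hom_\fg(\fg,\fg) = \bK$ forces $\fg$ to be one-dimensional and evenly concentrated --- the tacit exception.

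For (3), let $\mathfrak{a} \subsetneq \fg$ be a minimal ideal. The ideal $\mathfrak{a} \cap \mathfrak{a}^\perp$ sits inside $\mathfrak{a}$, so by minimality equals $0$ or $\mathfrak{a}$. In the first case $\fg = \mathfrak{a} \oplus \mathfrak{a}^\perp$ and the projection onto $\mathfrak{a}$ is an idempotent in $\hom_\fg(\fg,\fg) = \bK$, hence equal to $0$ or $I_\fg$; neither is compatible with $0 \neq \mathfrak{a} \neq \fg$. In the second case $\mathfrak{a} \subseteq \mathfrak{a}^\perp$ gives $m_2(\mathfrak{a},\mathfrak{a}) = 0$, and invariance delivers $m_2([\mathfrak{a},\mathfrak{a}],\fg) \subseteq m_2(\mathfrak{a},[\mathfrak{a},\fg]) \subseteq m_2(\mathfrak{a},\mathfrak{a}) = 0$, whence $[\mathfrak{a},\mathfrak{a}] = 0$ by nondegeneracy.

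The principal obstacle is purely bookkeeping: every step relies on moving a bracket across $m_2$ in the super setting, and the Koszul signs must be tracked carefully (though each application merely reproduces the classical computation). A secondary subtlety is recognising the one-dimensional even abelian Lie algebra as the single case where (1) and (2) formally fail, a standard degeneracy silently excluded here.
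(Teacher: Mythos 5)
Your proof is correct. Part (3) coincides with the paper's argument essentially word for word: $I\cap I^{\perp}$ is $0$ or $I$ by minimality, the first case would produce a nontrivial idempotent in $\Endd_{\fg}(\fg)=\bK$, and the second gives $m_2([I,I],\fg)\subseteq m_2(I,[I,\fg])=0$, hence $[I,I]=0$ by nondegeneracy. For parts (1)--(2) both you and the paper start from the identity $[\fg,\fg]^{\perp}=Z(\fg)$, but then argue in dual ways: the paper uses the one-dimensionality of the space of invariant bilinear forms $(\fg\otimes\fg)^{\ast\fg}\cong\hom_{\fg}(\fg,\fg)=\bK$ and derives a contradiction by pulling back a nonzero form from $\fg/[\fg,\fg]$, which is then nonzero, invariant and degenerate; you instead exhibit a nonzero non-scalar element of $\hom_{\fg}(\fg,\fg)$ directly, namely $\fg\twoheadrightarrow\fg/[\fg,\fg]\cong Z(\fg)^{\ast}\cong Z(\fg)\hookrightarrow\fg$. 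The two arguments are equivalent under $(\fg\otimes\fg)^{\ast\fg}\cong\hom_{\fg}(\fg,\fg^{\ast})\cong\hom_{\fg}(\fg,\fg)$, but yours has the merit of forcing the residual case $Z(\fg)=\fg$ into the open, and your ``tacit exception'' is a genuine one: the one-dimensional even abelian Lie algebra is quasisimple (the paper's own Theorem~\ref{simple_in_vs} and its footnote explicitly count the one-dimensional Lie algebra among the quasisimple ones), yet it is neither perfect nor centreless, so statements (1) and (2) fail for it. The paper's proof silently skips this case --- the pulled-back form $\widetilde{\alpha}$ is only guaranteed to be degenerate when $[\fg,\fg]\neq 0$, and in the one-dimensional abelian case no nonzero degenerate form exists --- so the proposition really does need the abelian case excluded, exactly as you observe. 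The Koszul-sign bookkeeping you defer is indeed routine and identical to the paper's implicit computations.
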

\begin{proof}
Let $I$ be a minimal ideal. 
Its orthogonal complement $I^\perp$ is also an ideal.
Hence, the intersection $I\cap I^\perp$ is zero or $I$.
If $I\cap I^\perp=0$,
then $\fg = I \oplus I^\perp$
and we get nontrivial idempotents in $\bK=\Endd_\fg (\fg )$.
Hence $I\cap I^\perp=I$, so $I\subseteq I^\perp$. This means that 
$
([x,y],a)=(x,[y,a])=0
$
for all $x,y\in I$, $a\in \fg$.
As the form is nondegenerate, $[x,y]=0$ and $I$ is abelian.

Now
$[\fg,\fg]^\perp = \{ a\in\fg \mid \forall x,y\in \fg \ ([x,y],a)=(x,[y,a])=0\} =
\{ a\in\fg \mid \forall y\in \fg \ [y,a]=0\} = Z(\fg)$.
We conclude 
that $\fg$ is perfect and $Z(\fg)=0$.
Indeed, otherwise there exists a nonzero bilinear form $\alpha$ on $\fg/[\fg,\fg]$.
Extend it to a form $\widetilde{\alpha}$ on $\fg$: 
$\widetilde{\alpha} (x,y) := \alpha (x+[\fg,\fg],y+[\fg,\fg])$.
The form $\widetilde{\alpha}$ is invariant 
($\widetilde{\alpha} ([x,y],z)=0=\widetilde{\alpha} (x,[y,z])$),
nonzero and degenerate. 
This contradicts the one-dimensionality of $\Endd_\fg (\fg)\cong (\fg\otimes\fg)^{\ast\fg}$.
\end{proof}

Using Proposition~\ref{qs_super} and Cartan's criteria \cite[4.3 and 5.1]{Hum}, 
we can furnish an alternative proof of
Theorem~\ref{simple_in_vs}.
Consider $\fg$ as a superalgebra with zero odd part, hence
Proposition~\ref{qs_super} applies.
Since $\fg$ is not solvable, the Killing form is nonzero. So the Killing form
is a multiple of the non-degenerate form, and itself nondegenerate.
Thus, $\fg$ is semisimple. It is simple since $\Endd_\fg (\fg)=\bK$.
This proof fails for the superalgebras because of the lack of the Cartan criteria.

\subsection{Universal metric Lie algebra}
\label{UmLa}
The universal metric Lie algebra 
appears in the study of Vassiliev invariants 
\cite{NiWi,Vog2}.
The relevant symmetric monoidal category $\cC$ is a prop.
The hom-set $\hom_\cC(m,n)$ is the $\bQ$-vector space of 
Jacobi diagrams\footnote{also known as Chinese characters}
\cite{NiWi}.
Recall that an $(m,n)$-Jacobi diagram is a compact curve $X$ such that
\begin{mylist}
\item[(1)] the boundary of $X$ is the set $\{1,2,\ldots m, 1^\prime, 2^\prime \ldots n^\prime\}$,
\item[(2)] $X$ has finitely many trivalent singular points, i.e. points $x$ with a neighbourhood $U$ such
that $U\setminus \{x\}$ is a union of three lines,
\item[(3)] for each trivalent singular point $x$ a cyclic ordering on the components of $U\setminus \{x\}$ is fixed.
\end{mylist}

Here is an example of the Jacobi diagram in $\hom_\cC(3,2)$:

\begin{picture}(50,50)(-140,0)
%
\includegraphics[scale=0.4]{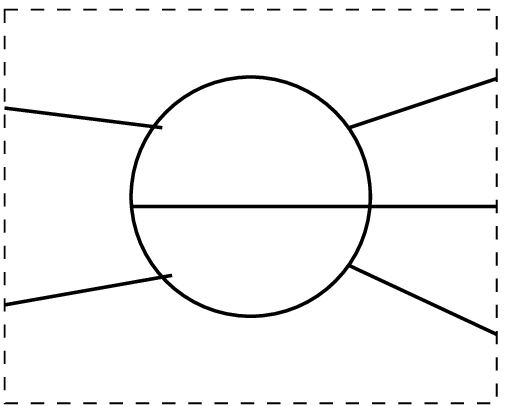}
\put(5,40){\makebox(0,0){$1$}}
\put(5,23){\makebox(0,0){$2$}}
\put(15,23){\makebox(0,0){.}}
\put(5,6){\makebox(0,0){$3$}}
\put(-63,34){\makebox(0,0){$1^\prime$}}
\put(-63,11){\makebox(0,0){$2^\prime$}}
\put(-18,12){\makebox(0,0){$\star$}}
\end{picture}

%
%
%
\noindent The external dashed borders have no significance. 
This diagram has five trivalent singular points,
four of them have the standard counterclockwise ordering, while the remaining vertex
marked with $\star$ has the opposite clockwise ordering.
The point of transversal intersection has no significance: 
it is actually two distinct points 
of the curve which
coincided after an immersion into a plane (a.k.a drawing).
Now $\hom_\cC(m,n)$ is the quotient space of
the $\bQ$-span of all $(m,n)$-Jacobi diagram subject to the $AS$-relation and the $IHX$-relation:

\begin{picture}(250,50)(-30,0)
\includegraphics[scale=0.4]{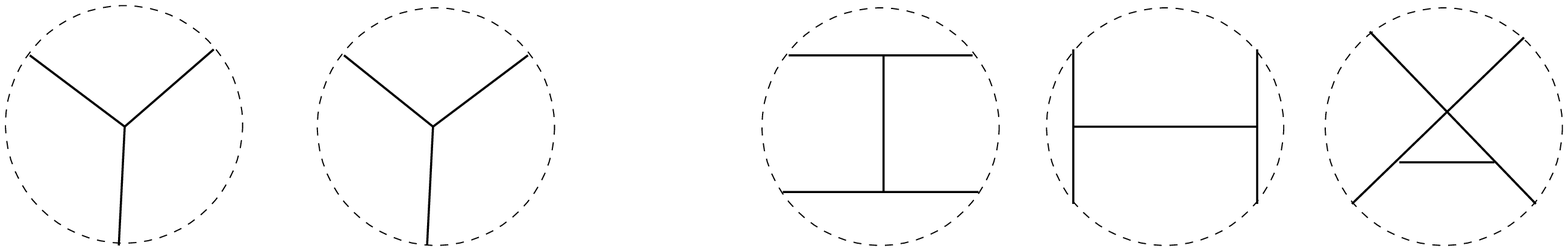}
\put(-246,25){\makebox(0,0){$+$}}
\put(-104,25){\makebox(0,0){$-$}}
\put(-50,25){\makebox(0,0){$+$}}
\put(-211,21){\makebox(0,0){$\star$}}
\put(-182,25){\makebox(0,0){$=0$,}}
\put(10,25){\makebox(0,0){$=0$.}}
\end{picture}

\noindent The dashed circles bound neighbourhoods in Jacobi diagrams $X_1$, $X_2$ for
the first relation and 
 $Y_1$, $Y_2$, $Y_3$ for
the second one, which are identical except for these neighbourhoods.
The relations mean that
$$
X_1+X_2=0 \ \mbox{ and } \ Y_1-Y_2+Y_3=0.
$$

The composition
and the tensor product of morphisms is done by ``stacking'' the boxes:

\begin{picture}(250,60)(-30,0)
\includegraphics[scale=0.4]{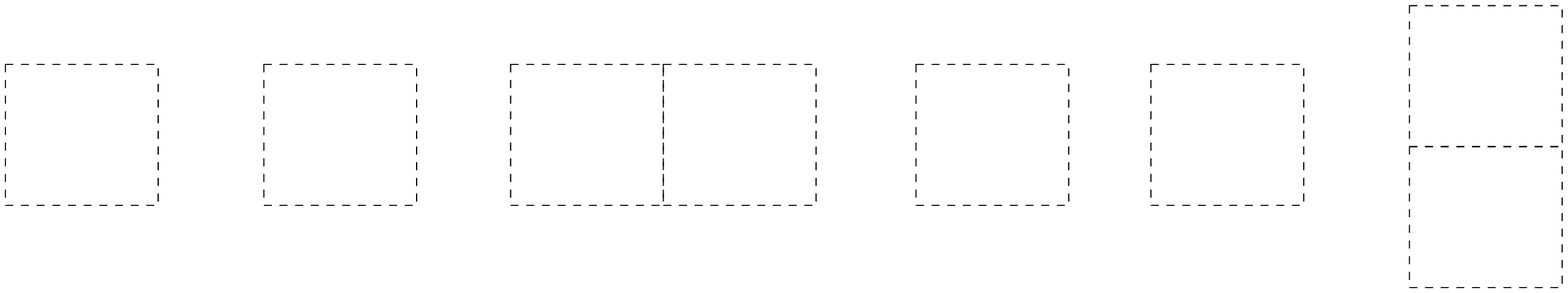}
\put(-237,31){\makebox(0,0){$Y$}}
\put(-161,31){\makebox(0,0){$Y$}}
\put(-64,31){\makebox(0,0){$Y$}}
\put(-15,13){\makebox(0,0){$Y$}}
\put(-287,31){\makebox(0,0){$X$}}
\put(-191,31){\makebox(0,0){$X$}}
\put(-110,31){\makebox(0,0){$X$}}
\put(-15,41){\makebox(0,0){$X$}}
\put(-262,30){\makebox(0,0){$\circ$}}
\put(-88,30){\makebox(0,0){$\otimes$}}
\put(-140,30){\makebox(0,0){;}}
\put(-213,30){\makebox(0,0){$=$}}
\put(-40,30){\makebox(0,0){$=$}}
\end{picture}

\noindent Thus, the tensor product of morphisms 
is just a union while 
the composition is gluing along the corresponding part of the boundary. 
The symmetric group $S_n$ embeds in a way that
$\widetilde{\pi}$ is a union of $n$ intervals connecting $k$ with $\pi (k)^\prime$:

\begin{picture}(200,70)(-150,0)
\includegraphics[scale=0.5]{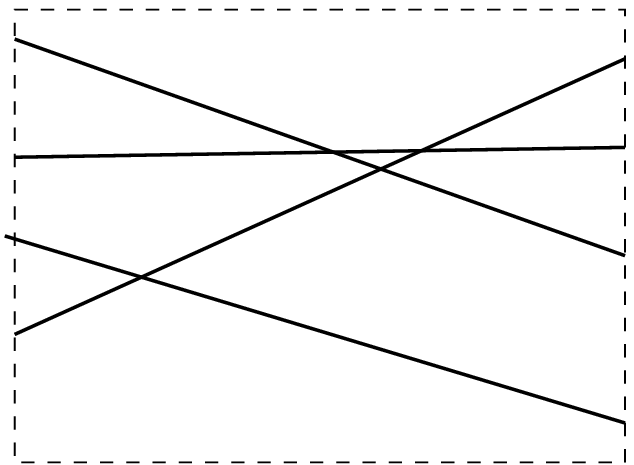}
\put(5,63){\makebox(0,0){$1$}}
\put(5,48){\makebox(0,0){$2$}}
\put(5,20){\makebox(0,0){$\vdots$}}
\put(5,3){\makebox(0,0){$n$}}
\put(17,32){\makebox(0,0){$\pi^{-1}(1)$}}
\put(-95,63){\makebox(0,0){$1^\prime$}}
\put(-103,47){\makebox(0,0){$\pi(2)^\prime$}}
\put(-103,33){\makebox(0,0){$\pi(n)^\prime$}}
\put(-103,16){\makebox(0,0){$\pi(1)^\prime$}}
\put(-70,20){\makebox(0,0){$\vdots$}}
\put(-15,27){\makebox(0,0){$\vdots$}}
\put(-140,34){\makebox(0,0){$\widetilde{\pi}=$}}
\put(44,34){\makebox(0,0){.}}
\end{picture}

The $\cC$-algebra $\fg_M = (1,\mu,\tau,\gamma)$ where

\begin{picture}(250,50)(-40,0)
%
\includegraphics[scale=0.4]{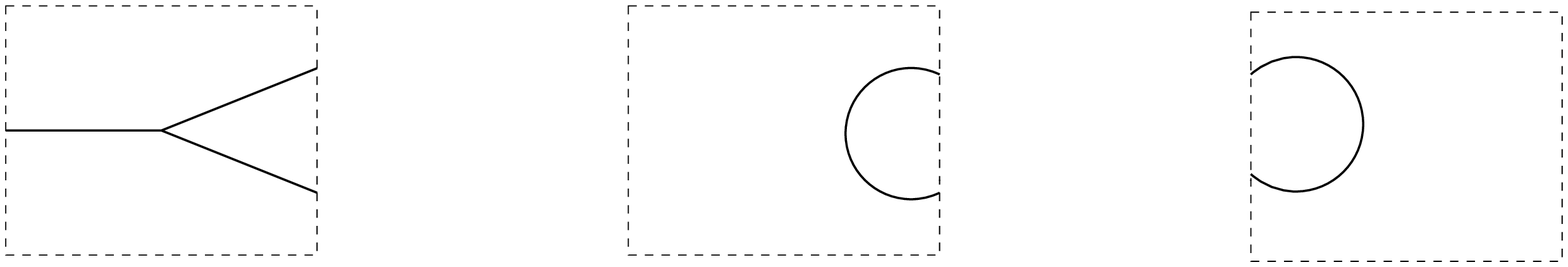}
\put(-290,25){\makebox(0,0){$1^\prime$}}
\put(-63,34){\makebox(0,0){$1^\prime$}}
\put(-63,12){\makebox(0,0){$2^\prime$}}
\put(-110,35){\makebox(0,0){$1$}}
\put(-110,12){\makebox(0,0){$2$}}
\put(-222,34){\makebox(0,0){$1$}}
\put(-222,12){\makebox(0,0){$2$}}
\put(-315,23){\makebox(0,0){$\mu =$}}
\put(-191,23){\makebox(0,0){, $\tau =$}}
\put(-88,23){\makebox(0,0){, $\gamma =$}}
\end{picture}

\noindent is a metric Lie algebra \cite[Prop. 2.4]{NiWi}.
The reader can easily verify this. Jacobi identity and anticommutativity
follow from
the AS-relation and the IHX-relation. The axioms of a metric object,
as well as
invariance of the form and co-form are evident on a drawing.
The universal property of this metric Lie algebra
is more subtle \cite[Prop. 2.5]{NiWi}.
We state it as 
a separate proposition.
\begin{prop}
Given a commutative $\bQ$-algebra $\bK$ and a metric Lie $\bK$-algebra 
$(\cD,\fg,m,t,c)$, there exists a unique specialization $\psi=(\psi_1,\psi_2,\psi_3) : \fg_M \rightarrow \fg$
such that 
\begin{mylist}
\item[(1)] $\psi_1 : \bQ\rightarrow \bK$ is 
the $\bQ$-algebra  structure on $\bK$,
\item[(2)] $\psi_2 = (\psi_{21},\psi_{22},\psi_{23}): \cC \otimes \bK  \rightarrow \cD$ is a tensor functor,
  defined on objects by $\psi_{21} (n) =\fg^{\otimes n}$, and uniquely
  determined on morphisms by the requirements that
\begin{mylist}
\item[(a)] $\psi_{21} (\mu)= m$, 
\item[(b)] $\psi_{21} (\tau ) = t$, 
\item[(c)] $\psi_{21} (\gamma)=c$,
\item[(d)]  $\psi_{22,n,m}: \fg^{\otimes n}\otimes \fg^{\otimes m}
  \rightarrow \fg^{\otimes (n+m)}$ is the composition of associativity
  constraints in $\cD$,
\item[(e)]  $\psi_{23}=I_\1$, the identity morphism of $\1_\cD$,
\end{mylist}
\item[(3)] $\psi_3=I_\fg$, the identity morphism of $\fg$.
\end{mylist}
\end{prop}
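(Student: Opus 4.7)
The plan is to reduce the proposition to a presentation statement for the category $\cC$ of Jacobi diagrams: that as a symmetric monoidal $\bQ$-linear category, $\cC$ is freely generated by a single metric Lie algebra object, namely the generator $1$. Once this is in hand, the desired specialization $\psi$ is just the universal tensor functor associated to the metric Lie algebra $\fg$ sitting in $\cD$.

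First I would dispense with uniqueness. The object function is pinned down by $\psi_{21}(n)=\fg^{\otimes n}$ together with (d). On morphisms, any Jacobi diagram can be put into ``generic position'' with respect to a vertical height function, so that it becomes a composition of horizontal slices, each of which is a tensor product of copies of $I_1$ with a single elementary piece: a trivalent vertex (sent to $m$ by (a)), a local maximum (sent to $t$ by (b)), a local minimum (sent to $c$ by (c)), or a crossing (sent to $\bc_{\fg,\fg}$ in $\cD$, forced by the fact that a tensor functor preserves the symmetric braiding). Conditions (a)--(e), together with tensor functoriality and $\bQ$-linearity, therefore determine $\psi_{21}$ on every morphism.

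For existence I would define $\psi_{21}(D)$ by choosing such a generic-position decomposition and applying the prescribed rule. The content is then to check that the result is (i) independent of the chosen decomposition, (ii) well-defined modulo the AS and IHX relations, and (iii) compatible with composition and tensor product of $\cC$. Independence of the slicing reduces to the coherence axioms of $\cD$ (pentagon, hexagon, naturality of $\bc$, interchange law) together with the snake identity $I_\fg=\br_\fg\circ(I_\fg\otimes t)\circ\ba_{\fg,\fg,\fg}\circ(c\otimes I_\fg)\circ\bl_\fg^{-1}$, which holds because $(\fg,t,c)$ is a metric object. The symmetry axioms $t=t\circ\bc_{\fg,\fg}$ and $c=\bc_{\fg,\fg}\circ c$ dispose of the local moves that flip a cap or cup. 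The AS relation maps to anticommutativity of $m$ and the IHX relation maps to the Jacobi identity, both of which hold by hypothesis. Finally, the moves in which a cap or cup slides past a trivalent vertex translate into the statement that $t$ and $c$ are morphisms in $\moC(\fg)$, which is exactly the metric Lie algebra hypothesis.

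The main obstacle will be the well-definedness bookkeeping: showing that every local move on a generic-position picture is accounted for by exactly one of (i) a symmetric-monoidal-category axiom in $\cD$, (ii) the metric-object axioms, (iii) the $\fg$-invariance of $t$ and $c$, or (iv) the AS and IHX relations. This is essentially a ``movie-move'' theorem for Jacobi diagrams; I would approach it by induction on the number of trivalent vertices, using IHX to reduce an arbitrary tree to a left-normed form and then using $\fg$-invariance of $c$ to recognize when two such normal forms evaluate to the same morphism. Compatibility with composition and tensor product of $\cC$ is then immediate from the stacking description of those operations.
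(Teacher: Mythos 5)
The paper offers no proof of this proposition: it is stated with the remark that the universal property is ``more subtle'' and a pointer to \cite[Prop.~2.5]{NiWi} (and to \cite[Cor.~2.2]{NiWi} for the equivalent assertion that $\cC\otimes\bK$-algebras are the same as metric Lie $\bK$-algebras). So there is nothing in the paper to compare your argument against step by step; what you have written is, in outline, the standard generators-and-relations proof underlying the cited result: exhibit $\cC$ as the free $\bQ$-linear symmetric monoidal category on a metric Lie algebra object by slicing a Jacobi diagram along a height function into elementary pieces ($\mu$, $\tau$, $\gamma$, crossings, identities), and check that the moves relating two slicings are absorbed by the coherence axioms of $\cD$, the snake identity of the dual pair, the symmetry and $\fg$-invariance of $t$ and $c$, while AS and IHX go to anticommutativity and Jacobi. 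That skeleton is correct, and your uniqueness argument is fine.

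Two cautions before you could call this complete. First, your closing suggestion --- induction on the number of trivalent vertices, ``using IHX to reduce an arbitrary tree to a left-normed form'' --- conflates two independent verifications. Independence of $\psi_{21}(D)$ from the chosen slicing of a \emph{fixed} diagram $D$ is a purely Morse-theoretic (movie-move) statement about the underlying curve and must be settled before the relations enter; you cannot invoke IHX there, since IHX is a relation you are trying to show is respected, not a tool for normalizing a single diagram. Only afterwards does one check that the now well-defined linear map kills the AS and IHX relators. Second, make the cyclic symmetry of a trivalent vertex explicit: a singular point carries only a cyclic ordering of its three branches, so the same vertex may be read as a $(2,1)$-, $(1,2)$-, $(3,0)$- or $(0,3)$-piece in different slicings, and consistency requires that $t\circ(m\otimes I_\fg)$ be invariant under cyclic permutation of its three inputs. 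This follows from invariance of the form together with its symmetry, but it is a check separate from the ``cap slides past a vertex'' move you list. With those two points tightened, your sketch is a correct, if laborious, substitute for the citation.
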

The specialization $\psi$ 
is an example of isospecialization.
Its constituent 
is a functor 
$\psi_2: \cC \otimes \bK  \rightarrow \cD$
that turns $\fg$ into an algebra over the prop
$\cC\otimes\bK$.
In the language of props $\cC\otimes\bK$
is the universal metric Lie algebra over $\bK$:
$\cC\otimes\bK$-algebras are the same as metric Lie $\bK$-algebras \cite[Cor. 2.2]{NiWi}.

In the rest of the section we will show
that $\fg_M$ is not quasisimple.
Consider $\hom^k_\cC(n,m)$, the $\bQ$-linear span of all those
diagrams
in $\hom_\cC(n,m)$ that contain precisely $k$ connected components.
The $AS$-relation and $IHX$-relation do not change the number of connected components.
Hence the spaces
$\hom^k_\cC(n,m)$ for different $k$ have zero intersections,
while $\hom_\cC(n,m)$ splits into a direct sum
$\hom_\cC(n,m)=\oplus_{k=0}^\infty \hom^k_\cC(n,m)$.
Notice that $\hom^0_\cC(n,m)\neq 0$ if and only if $n=m=0$: it is
spanned by the empty diagram.
The tensor products preserve this grading, but the compositions,
in general, destroy it. Let us summarize properties of the grading.
\begin{prop} 
\label{Lambda}
The following statements hold.
\begin{mylist}
\item[(1)] 
$
\hom^k_\cC(n,m) \otimes \hom^s_\cC(n,m) \subseteq \hom^{k+s}_\cC(n,m)
$
for all $k$, $s$, $n$, $m$.
\item[(2)]  
$
\hom^k_\cC(0,m) \circ \hom^s_\cC(n,0) \subseteq \hom^{k+s}_\cC(n,m)
$
for all $k$, $s$, $n$, $m$.
\item[(3)] Under this grading 
the scalar ring $\bK_\cC = \hom_\cC(0,0)$ is a graded
algebra, isomorphic to the symmetric algebra of 
$
\hom^1_\cC(0,0)
$.
\item[(4)]  Under this grading 
each $\hom_\cC(n,m)$ is a graded
$\bK_\cC$-module.
\end{mylist}
\end{prop}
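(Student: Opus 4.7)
The first move is to check that the component grading is well-defined on the quotient, i.e.\ that the $AS$-relation and the $IHX$-relation preserve the number of connected components. Both relations are strictly local: the $AS$-relation compares two diagrams that differ only by reversal of the cyclic order at a single trivalent vertex, while the $IHX$-relation compares three diagrams that differ only inside a small disc containing one internal edge. The set of points outside this disc, and all its incident attachments, is the same for all diagrams in the relation, so the relation can at worst redistribute incidences inside the disc. A straightforward case check shows the disc-local subdiagrams appearing in $AS$ and $IHX$ are connected in each of the terms, so the global component count is preserved. This gives the well-defined decomposition $\hom_\cC(n,m) = \bigoplus_{k\geq 0}\hom^k_\cC(n,m)$.

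For (1), the tensor product in the prop $\cC$ is realized by side-by-side placement, i.e.\ $X\otimes Y$ is the disjoint union of $X$ and $Y$ with boundary relabelled, so component counts add. For (2), the composition $Y\circ X$ of $X\in\hom_\cC(n,0)$ with $Y\in\hom_\cC(0,m)$ is gluing along an empty set of boundary points, so $Y\circ X = X\sqcup Y$ as a curve, and again component counts add. Part (4) follows directly: the action of $\bK_\cC = \hom_\cC(0,0)$ on $\hom_\cC(n,m)$ factors through $X\mapsto X\otimes\lambda$ (via the left unit constraint), which is disjoint union, and hence raises the grading by $k$ when $\lambda\in\hom^k_\cC(0,0)$.

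Part (3) is the substantive one and I would attack it in three steps. First, because $0+0=0$, the tensor product and the composition on $\hom_\cC(0,0)$ agree, and both are the disjoint-union operation on closed Jacobi diagrams. Second, every closed diagram decomposes uniquely as a topological space into its connected components, producing a surjective $\bQ$-algebra map $S^{\bullet}(\hom^1_\cC(0,0))\twoheadrightarrow \bK_\cC$ from the symmetric algebra; the domain is commutative, and the target is commutative by Proposition~1, so the map is well-defined. Third, and this is the core point, I would prove injectivity by exploiting the locality of $AS$ and $IHX$ already used above: each instance of these relations is supported inside a small disc meeting at most one connected component, so any relation among closed diagrams can be rewritten as a $\bQ$-linear combination of relations applied inside individual connected components. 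Equivalence of two disjoint-union expressions $c_1\sqcup\cdots\sqcup c_r$ and $d_1\sqcup\cdots\sqcup d_s$ in $\bK_\cC$ therefore forces $r=s$ and the existence of a permutation $\sigma$ with $c_i \equiv d_{\sigma(i)}$ in $\hom^1_\cC(0,0)$, which is exactly the relation imposed in the symmetric algebra.

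The main obstacle is the injectivity step in part (3): the proof hinges on making the locality of the $AS$ and $IHX$ relations precise enough to guarantee that they can be carried out componentwise. Once this is formalised, parts (1), (2) and (4) reduce to the observation that tensoring and composing at an empty interface are both disjoint union, and no additional calculation is needed.
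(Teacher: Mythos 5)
Your proposal is correct and follows essentially the same route as the paper: parts (1), (2) and (4) via the observation that both the tensor product and composition at an empty interface are disjoint unions, and part (3) via surjectivity from decomposition into components together with injectivity deduced from the locality of the $AS$ and $IHX$ relations, which confine each relation to a single connected component. The paper merely formalises your injectivity step by comparing the two relation ideals in the polynomial algebra on connected closed diagrams, which is exactly the rewriting you describe.
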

\begin{proof}
The tensor product of morphisms is a disjoint union. Hence,
the number of components of the tensor product is the sum, and (1)
follows. A similar argument proves (2).
(4) immediately follows from (1).

It follows from (1) 
that  $\hom_\cC(0,0)$ is a graded  commutative algebra.
Thus, there is a natural homomorphism
$\varphi : S(\hom^1_\cC(0,0))\rightarrow\hom_\cC(0,0)$
of graded algebras.
It is surjective because each diagram is a product of its components.

Let us show that $\varphi$ is injective. 
Consider $x_1,x_2\ldots$, all connected Jacobi diagrams without
boundary before the $AS$-relation and the $IHX$-relation are imposed.
There are natural homomorphisms of graded algebras $\varphi_1$,
$\varphi_2$ making the diagram

\hspace{55mm}
\begin{picture}(150,60)(0,0)
\put(0,60){$ S(\hom^1_\cC(0,0))$}
\put(105,60){$\hom_\cC(0,0)$}
\put(70,63){\vector(1,0){30}}
\put(80,55){$\varphi$}
\put(13,8){$\bQ[x_1,x_2\ldots ]$}
\put(42,19){\vector(0,1){35}}
\put(27,35){$\varphi_1$}
\put(58,19){\vector(3,2){56}}
\put(91,32){$\varphi_2$}
\end{picture}

\noindent
commutative.
The kernel of $\varphi_1$
is the ideal $I_1$ generated by all $y_1+y_2$ from the $AS$-relation
and all $y_1-y_2+y_3$ from the $IHX$-relation, where $y_1$, $y_2$ and
$y_3$
are connected.
Similarly,
The kernel of $\varphi_2$
is the ideal $I_2$ generated by all $z_1+z_2$ from the $AS$-relation
and all $z_1-z_2+z_3$ from the $IHX$-relation, where $z_1$, $z_2$ and
$z_3$
are are no longer necessarily connected.
Both relations operate on just one component,
that is, if  $z_1+z_2$ is an $AS$-relation,
$z_1=x_1z$ and $z_2=x_2z$ where
$x_i$ is the component affected by the $AS$-relation
and $z$ is the union of the remaining components.
Thus, $z_1+z_2= (x_1+x_2)z\in I_1$.
Similarly,  $z_1-z_2+z_3= (x_1-x_2+x_3)z\in I_1$
for all $IHX$-relations. 
It follows that $I_1=I_2$ and $\varphi$ is injective.
\end{proof}

Since connected Jacobi diagrams span $\hom^1_\cC(0,0)$,
one can choose a basis $y_1,y_2\ldots$ among them. It follows that 
$\bK_\cC = \hom_\cC(0,0)$
is isomorphic the polynomial algebra
$\bQ[y_1,y_2\ldots ]$
on the chosen connected $(0,0)$-Jacobi diagrams such as

\begin{picture}(250,50)(-60,0)
\includegraphics[scale=0.4]{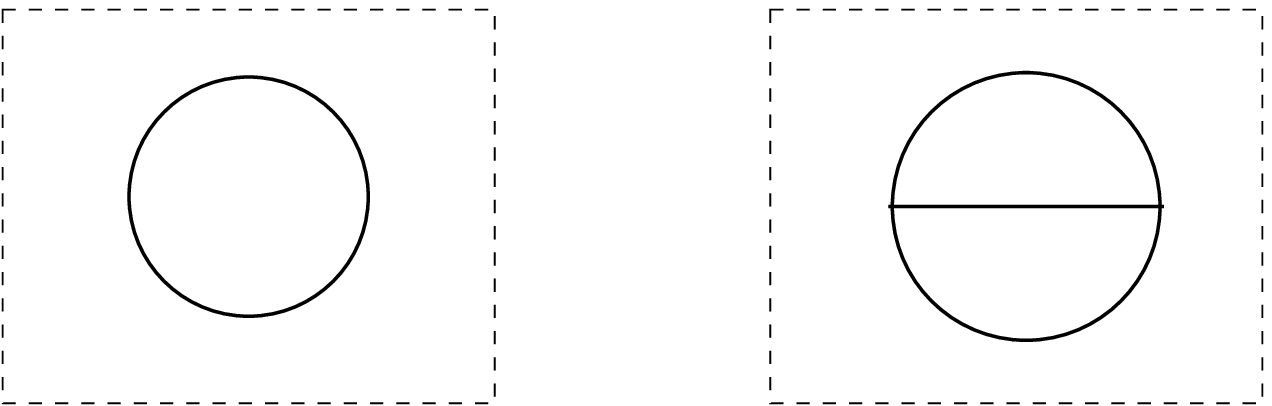}
\put(-160,23){\makebox(0,0){$\delta =$}}
\put(-71,23){\makebox(0,0){, $\theta =$}}
\put(16,23){\makebox(0,0){, $\cdots$}}
\put(60,26){\makebox(0,0){$\in \hom^1_\cC (0,0)$,}}
\end{picture}

\noindent while
$\Endd_{\fg_M} (\fg_M) = \hom_\cC (1,1)$ is a graded
$\bK_\cC$-module. The homomorphism
$\bK_\cC \rightarrow \hom_\cC (1,1)$ 
is the action on the identity element $v\mapsto v\otimes I_1$. 
Since  $I_1\in \hom^1_\cC (1,1)$ and $\hom^0_\cC (0,0)=\bQ$,
any connected non-identity diagram from $\hom^1_\cC (1,1)$
is not in the image of the map 
$\bK_\cC \rightarrow \Endd_{\fg_M}
(\fg_M)$.
In particular, the following element $\phi$
is not in the image:

\begin{picture}(250,50)(-60,0)
\includegraphics[scale=0.4]{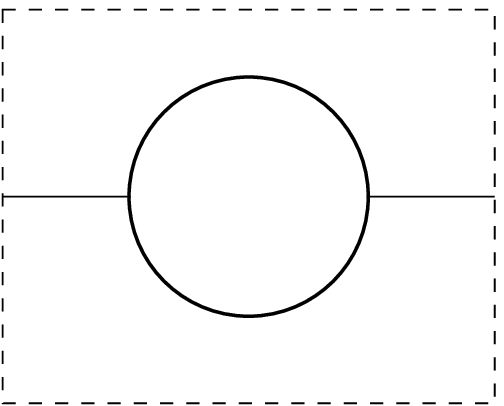}
\put(-71,23){\makebox(0,0){$\phi =$}}
\put(64,23){\makebox(0,0){.}}
\put(34,26){\makebox(0,0){$\in \hom_\cC (1,1)$}}
\end{picture}

\subsection{Vogel's ring $\Lambda$}
\label{VrL}
The symmetric group $S_n$ acts on the vector space $\hom^k_\cC(n,0)$ by permuting the inputs.
As a vector space, Vogel's ring $\Lambda$ is $\hom^1_\cC(3,0)^{S_3,\varepsilon}$,
the skew invariants with respect to the sign character $\varepsilon$ of $S_3$. 
The multiplication is via insertion of one diagram into any trivalent point of the second diagram:

\begin{picture}(250,50)(-40,0)
\includegraphics[scale=0.4]{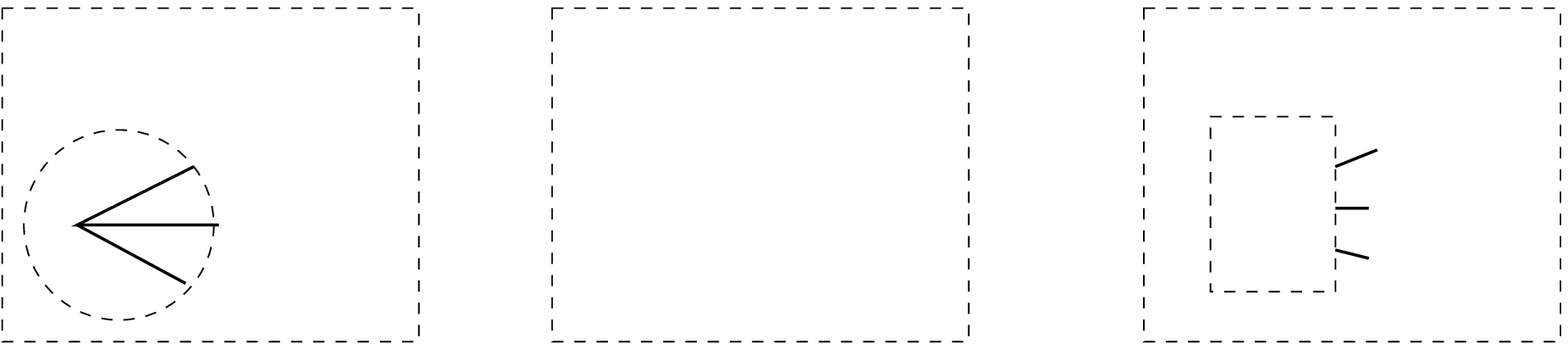}
\put(-40,18){\makebox(0,0){$Y$}}
\put(-110,23){\makebox(0,0){$Y$}}
\put(-10,40){\makebox(0,0){$X$}}
\put(-166,40){\makebox(0,0){$X$}}
\put(-148,23){\makebox(0,0){$\cdot$}}
\put(-69,23){\makebox(0,0){$=$}}
\put(13,23){\makebox(0,0){.}}
\end{picture}

Observe that connectedness and skew-invariance 
of elements of
$\hom^1_\cC(3,0)$
are crucial for this product to be well-defined.
Once a trivalent singular point is chosen, there are six ways to insert 
a diagram from $\hom^1_\cC(3,0)$. 
Skew-invariance removes this dependency.
If two singular points are connected by an edge, 
one can see that insertions into each
of these two points give the same result. 
Consequently, the
connectedness eliminates the dependency of insertion on the choice of a singular point.
Vogel proves these facts \cite[Prop 3.2]{Vog1}.
At the same time he establishes that the insertion
defines a structure of a
$\Lambda$-module on the linear span of connected diagrams with a non-empty set of trivalent
singular points
$\hom^s_\cC(m,n)\subseteq\hom_\cC(m,n)$.

While associativity of $\Lambda$ is obvious, commutativity 
requires a subtle argument \cite[Prop 4.8]{Vog2}.
Notice that Vogel works over any commutative coefficient ring $R$
and proves the identity $12xy = 12yx$ in $\Lambda_R$. 
In our case $1/12 \in R = \bQ$,
therefore the ring $\Lambda$ is commutative.
Here are some elements of $\Lambda$:

\begin{picture}(250,50)(-40,0)
\includegraphics[scale=0.4]{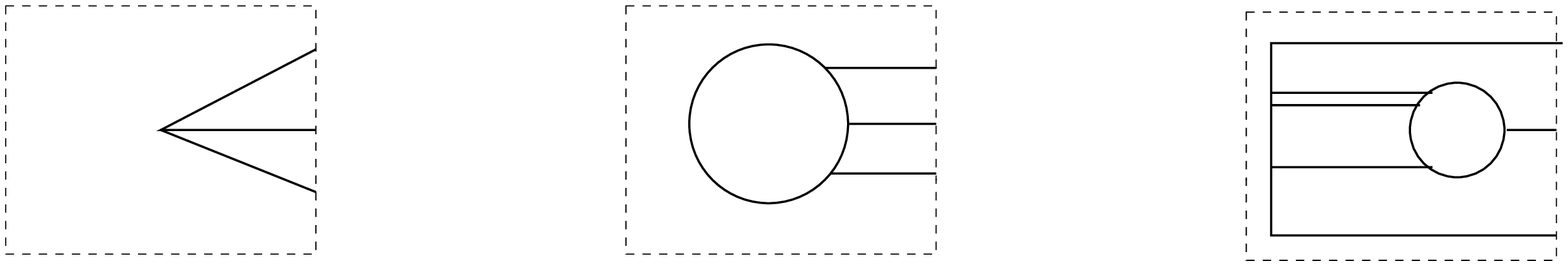}
\put(-43,27){\makebox(0,0){$\vdots$}}
\put(-37,24){\makebox(0,0){$n$}}
\put(-315,23){\makebox(0,0){$1 =$}}
\put(-191,23){\makebox(0,0){, $t =$}}
\put(-88,23){\makebox(0,0){, $x_n =$}}
\put(13,23){\makebox(0,0){.}}
\end{picture}

\noindent Let us note that $x_1=2t$ and $x_2=t^2$ \cite{Vog2}.

The ring $\Lambda$ is naturally graded: $\Lambda_n$ is spanned by the diagrams with $2n+1$ trivalent singular points.
The Poincare series of $\Lambda$ is not known.

The ring $\Lambda$ is closely related to 
a subalgebra 
$\widetilde{\Lambda} = \bQ [\sigma_1] \oplus \omega \bQ[z_1,z_2,z_3]^{S_3}$ 
of $\bQ[z_1,z_2,z_3]^{S_3}$
where
$\omega = (\sigma_1 + z_1)(\sigma_1 + z_2)(\sigma_1 + z_3) = \sigma_3 +\sigma_2 \sigma_1 + 2 \sigma_1^3$
and 
$\sigma_1$, $\sigma_2$, $\sigma_3$ 
are the elementary symmetric polynomials.
Kneissler constructs further elements $\chi_n \in \widetilde{\Lambda}$ by
$$
\chi_0 = 0, \
\chi_1 = 2\sigma_1, \
\chi_2 = \sigma_1^2, \
\chi_{n+3} =
\sigma_1 \chi_{n+2}
- \sigma_2 \chi_{n+1}
+ \sigma_3 \chi_{n}
+ \frac{\sigma_2 \sigma_1^{n+1}}{2}
- \frac{\sigma_3 \sigma_1^{n}}{2}
- \sigma_3 (2 \sigma_1)^{n}
$$
and a homomorphism of graded algebras \cite{Knei}
$$
\varphi : \widetilde{\Lambda} \rightarrow \Lambda, \ \
\varphi (\chi_n) = x_n.
$$
Vogel constructs a polynomial $\pi\in \widetilde{\Lambda}$ such that
$\varphi (\pi)=0$ and conjectures that $\widetilde{\Lambda}/ (\pi)\cong \Lambda$ \cite{Vog1}.

There is a canonical invariant non-degenerate form (the Killing form)
$K: \fg\otimes\fg \rightarrow \bC$
on a simple complex Lie algebra
$\fg$.
It determines a canonical isospecialization
$\psi: \fg_M \rightarrow \fg$
that, in its turn, 
defines a canonical ring homomorphism
$\Theta_\fg : \Lambda \rightarrow \bC$.
It is sensible to call it {\em the Vogel character of Lie algebra}
$\fg$.
Let us clarify its origin
\cite{Vog1,Vog2}.
An element
$X\in\Lambda$ 
gives a $\fg$-invariant map 
$\psi_2 (X): \wedge^3 \fg\rightarrow \bC$.
The space of such maps is one-dimensional, hence,
$\psi_2 (X)=\alpha \psi_2 (1_\Lambda)$ for some complex number 
$\alpha$.
\begin{prop}
In the notation of the last section
the map
$\Theta_\fg (X) = \alpha$
is a ring homomorphism.
\end{prop}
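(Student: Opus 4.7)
The plan is to check the three defining properties of a ring homomorphism: additivity, preservation of the unit, and multiplicativity. Additivity is immediate from the $\bQ$-linearity (and $\bC$-linear extension) of the tensor functor $\psi_2$ together with the one-dimensionality of the space of $\fg$-invariant skew forms $\wedge^3\fg\to\bC$, which makes $\alpha$ uniquely determined and linear in $X$. Preservation of the unit holds by construction: $\psi_2(1_\Lambda)$ is (up to a sign coming from skew-symmetrization) the canonical invariant $(x,y,z)\mapsto K([x,y],z)$, which is nonzero because $\fg$ is perfect and $K$ is nondegenerate, so $\Theta_\fg(1_\Lambda)=1$.

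The real content is multiplicativity. Recall that the product $X\cdot Y$ in $\Lambda$ is obtained by removing a trivalent vertex $v$ of (a representative of) $Y$ and inserting the three-legged diagram $X$ in its place. Since the unit $1_\Lambda$ is itself the Y-shape consisting of a single trivalent vertex with three external legs, this insertion is literally the same operation as replacing the copy of $1_\Lambda$ sitting inside $Y$ at $v$ by $X$. Because $\psi_2$ is a tensor functor out of the prop $\cC\otimes\bC$ (which, by the universal property stated earlier in Section~\ref{UmLa}, realizes every Jacobi diagram as an iterated contraction of brackets along trivalent vertices and Killing forms along edges), the evaluation of $Y$ factorizes through $v$ as
\[
\psi_2(Y) \;=\; \Phi\bigl(\psi_2(1_\Lambda)\bigr),
\]
where $\Phi$ denotes the remainder of the contraction (the other brackets, Killing forms, and structural isomorphisms), viewed as a linear map from $\hom_\bC(\fg^{\otimes 3},\bC)$ to $\hom_\bC(\fg^{\otimes 3},\bC)$. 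Functoriality then gives $\psi_2(X\cdot Y) = \Phi\bigl(\psi_2(X)\bigr)$.

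Substituting $\psi_2(X) = \Theta_\fg(X)\,\psi_2(1_\Lambda)$ and using $\bC$-linearity of $\Phi$, we obtain
\[
\psi_2(X\cdot Y)\;=\;\Theta_\fg(X)\,\Phi\bigl(\psi_2(1_\Lambda)\bigr)\;=\;\Theta_\fg(X)\,\psi_2(Y)\;=\;\Theta_\fg(X)\,\Theta_\fg(Y)\,\psi_2(1_\Lambda).
\]
Since $\psi_2(1_\Lambda)\neq 0$, extracting the scalar yields $\Theta_\fg(X\cdot Y) = \Theta_\fg(X)\Theta_\fg(Y)$.

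The main obstacle is justifying the local factorization $\psi_2(Y)=\Phi(\psi_2(1_\Lambda))$ rigorously. This amounts to checking that $\psi_2$, as a tensor functor into the category of $\bC$-vector spaces, is compatible with decomposing a Jacobi diagram at a chosen trivalent vertex into the one-vertex subdiagram and its complement, and that the remainder $\Phi$ is indeed $\bC$-linear in the inserted piece. A minor subtlety is independence of $v$: this is already guaranteed by the well-definedness of the product in $\Lambda$ cited from Vogel (the combination of skew-invariance of elements of $\Lambda$ and connectedness of the diagrams), which ensures both sides of the final equation are intrinsic.
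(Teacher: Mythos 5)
Your proof is correct and follows essentially the same route as the paper: the paper writes $XY = X\circ\widetilde{Y}$ and $Y = 1_\Lambda\circ\widetilde{Y}$, where $\widetilde{Y}\in\hom_\cC(3,3)$ is $Y$ with a neighbourhood of the chosen trivalent vertex removed, so your map $\Phi$ is simply precomposition with $\psi_2(\widetilde{Y})$, and the ``local factorization'' you flag as the main obstacle becomes a literal instance of functoriality of $\psi_2$ under composition in the diagram category. Nothing further is needed.
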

\begin{proof}
It is obvious that 
$\Theta_\fg$ 
is linear. Let
$\widetilde{Y}\in\hom_\cC (3,3)$
be the result of removal of a neighbourhood of a singular point from 
$Y$.
Now one can represent products in Vogel ring via compositions:
$$
XY = X \circ \widetilde{Y}
\ \ \ 
\mbox{ and }
\ \ \ 
Y = 1_\Lambda \circ \widetilde{Y},
$$
which implies that 
$
\psi_2 (XY) = \psi_2 (X)\circ \psi_2 (\widetilde{Y})
= \Theta_\fg (X)  \psi_2 (1_\Lambda)\circ \psi_2 (\widetilde{Y})
= \Theta_\fg (X)  \psi_2 (Y)
$
and, finally, 
$
\Theta_\fg (XY) = 
\Theta_\fg (X) \Theta_\fg (Y)$.
\end{proof}

\subsection{Vogel's universal Lie algebra}
Vogel has realized how to combine 
the character
$\Theta_\fg : \Lambda \rightarrow \cC$
and the isospecialization
$\psi: \fg_M\rightarrow \fg$ 
into a single structure
by forcing the action of $\Lambda$ on  the category $\cC$ \cite{Vog2}.
The new tensor category $\cC^\prime$ is a quotient of $\cC\otimes_{\bQ}\Lambda$
by the Vogel relations:

\begin{picture}(250,50)(-40,0)
\includegraphics[scale=0.4]{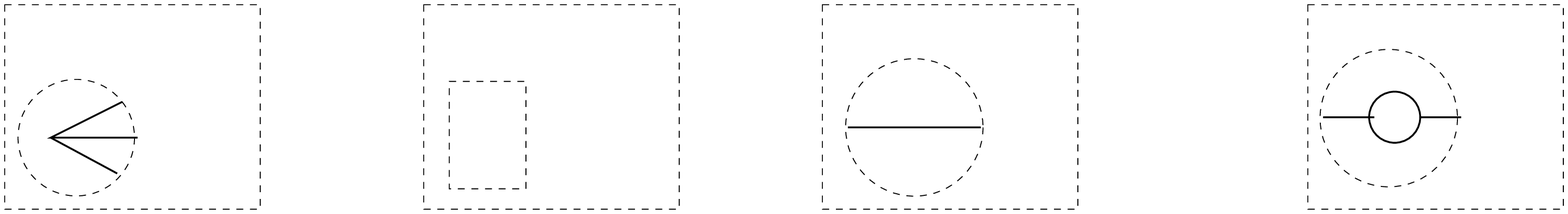}
\put(-302,38){\makebox(0,0){$X$}}
\put(-208,38){\makebox(0,0){$X$}}
\put(-240,17){\makebox(0,0){$v$}}
\put(-118,38){\makebox(0,0){$Y$}}
\put(-10,38){\makebox(0,0){$Y$}}
\put(-272,23){\makebox(0,0){$\otimes v \ \ -$}}
\put(-185,23){\makebox(0,0){$\otimes 1 \; $;}}
\put(-86,23){\makebox(0,0){$\otimes 2t \ \ -$}}
\put(10,23){\makebox(0,0){$\otimes 1$.}}
\end{picture}

\noindent Thus, $\cC^\prime$ is also a prop with 
morphisms
$
\hom_{\cC^\prime} (m,n) := \hom_{\cC} (m,n) \otimes_{\bQ} \Lambda / I_{m,n}
$
where $I_{m,n}$ 
is the $\Lambda$-span of all 
$$
X_1 \otimes v - X_2 \otimes 1 \ , \  \ Y_1 \otimes 2t - Y_2 \otimes 1
$$
where
$v\in\Lambda$, $X_1$ and $X_2$ differ as on the diagrams in the first relation,
$Y_1$ and $Y_2$ differ as on the diagrams in the second relation.

\begin{theorem}
\label{vog_m}
\begin{mylist}
\item[(1)] $\cC^\prime$ is a tensor category over $\Lambda [\delta]$.
\item[(2)] $\fg_V = (1,\mu\otimes 1,\tau\otimes 1,\gamma\otimes 1)$ is
a quasisimple Lie $\cC^\prime$-algebra.
\end{mylist}
\end{theorem}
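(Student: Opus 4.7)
\emph{Plan for (1).} The first task is to verify $\cC^\prime$ is genuinely a tensor category. Both Vogel relations are local: the two diagrams in each relation differ only in a small neighborhood. Composition and tensor product in $\cC$ are realised by boundary-gluing and disjoint union of diagrams, operations that commute with such local modifications. Consequently, pre- and post-composing or tensoring any element of $I_{m,n}$ with any morphism lands again in the $\Lambda$-span of Vogel relations, and $\cC^\prime$ inherits a well-defined $\bQ$-linear symmetric monoidal structure. For the scalar ring, Proposition~\ref{Lambda} identifies $\hom_\cC(0,0)$ with a polynomial ring on connected closed Jacobi diagrams. The first Vogel relation lets one absorb internal trivalent vertices into $\Lambda$; the second collapses bubbles. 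Inducting on the total number of trivalent vertices reduces any connected closed diagram modulo $I_{0,0}$ to a $\Lambda$-multiple of the minimal connected closed diagram $\delta$, yielding $\bK_{\cC^\prime} \cong \Lambda[\delta]$.

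\emph{Plan for (2).} All defining axioms of a metric Lie algebra (Jacobi, anticommutativity, non-degeneracy, symmetry of form and coform) are identities between morphisms in $\cC$, and they survive the projection $\cC\otimes_{\bQ}\Lambda \twoheadrightarrow \cC^\prime$. Hence $\fg_V$ inherits a metric Lie $\cC^\prime$-algebra structure. What remains is to see that the natural map $\bK_{\cC^\prime}\to\hom_{\fg_V}(\fg_V,\fg_V)$ is an isomorphism. The strategy is to establish the stronger statement $\hom_{\cC^\prime}(1,1) = \Lambda[\delta]\cdot I_1$, from which $\fg_V$-invariance and surjectivity of the natural map follow automatically; injectivity is the free polynomial structure from (1). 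Every $(1,1)$-Jacobi diagram factors as a product of a connected $(1,1)$-component and connected closed components, and the latter are handled by (1). For the connected $(1,1)$-component, choose a trivalent vertex and apply the first Vogel relation at it: this rewrites the class modulo $I_{1,1}$ as a $\Lambda$-scalar times a connected $(1,1)$-diagram with one fewer internal vertex. Induction on vertex count terminates at $I_1$.

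\emph{Main obstacle.} The delicate step is this inductive reduction of connected $(1,1)$-diagrams. In $\cC$ itself, connected endomorphisms such as $\phi$ exhibited at the end of subsection~3.3 are manifestly not scalar multiples of $I_1$, which is precisely why $\fg_M$ fails to be quasisimple. Vogel's construction reviewed in subsection~\ref{VrL} is engineered so that insertion at any trivalent vertex defines a well-posed $\Lambda$-action on the space of connected diagrams with at least one internal vertex: independence of the chosen vertex is exactly what skew-invariance and connectedness of elements of $\Lambda$ ensure. Making the induction rigorous requires invoking this action together with the fact that connected $(1,1)$-diagrams with internal structure form a cyclic $\Lambda$-module generated by a single bubble attached to the identity strand. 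Once this cyclicity is in hand, parts (1) and (2) fall out of the same reduction procedure.
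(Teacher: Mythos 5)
Your overall architecture matches the paper's: locality of the Vogel relations gives the tensor-ideal property, and both parts then reduce to computing $\hom_{\cC^\prime}(0,0)$ and $\hom_{\cC^\prime}(1,1)$. The genuine gap is in how you propose to do those computations. Your induction on the number of trivalent vertices does not work as stated: the first Vogel relation only trades a diagram for a $\Lambda$-scalar times a smaller diagram when the diagram is \emph{exhibited} as the insertion of a skew-invariant element $v\in\Lambda=\hom^1_\cC(3,0)^{S_3,\varepsilon}$ into a smaller diagram. Removing a neighbourhood of an arbitrary trivalent vertex of a connected diagram $D$ only presents $D$ as $1_\Lambda$ inserted into $D$, which reduces nothing; to strip off more than one vertex you would need a three-edge cut isolating a sub-diagram which is moreover skew-invariant under $S_3$, and a generic connected Jacobi diagram admits no such decomposition. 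The statement you actually need --- that the connected closed diagrams with a vertex form the free rank-one $\Lambda$-module $\Lambda\cdot\theta$, that the connected $(1,1)$-diagrams with a vertex form the free module $\Lambda\cdot\phi$, and that $\hom^c_\cC(1,0)=\hom^c_\cC(0,1)=0$ (without which your factorisation of a $(1,1)$-diagram into one connected $(1,1)$-component plus closed components is false a priori, since the two boundary points could lie on different components) --- is a nontrivial theorem of Vogel, which the paper simply cites as \cite[Cor 4.6]{Vog1} and \cite[Prop. 4.3]{Vog1}.

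You half-acknowledge this (``once this cyclicity is in hand''), but note also that cyclicity alone is weaker than what is required: to conclude $\Endd_{\fg_V}(\fg_V)\cong\bK_{\cC^\prime}$ rather than a proper quotient of it, you need \emph{freeness} of $\Lambda\cdot\phi$ and of $\Lambda\cdot\theta$, since otherwise imposing $\phi=2tI_1$ and $\theta=2t\delta$ could force additional relations back onto the scalar ring. So either cite Vogel's structure theorems, as the paper does, or prove them; the vertex-count induction is not a substitute for them.
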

\begin{proof} Observe that the tensor product and the composition of cosets
$$
(f+I_{n,m})\circ (g+I_{k,n}) = fg+I_{k,m} , \
(f+I_{m,n})\otimes (g+I_{m^\prime,n^\prime}) = f\otimes g +I_{m+m^\prime,n+n^\prime}
$$
are well defined. 
To see this, it suffices to establish that the collection of subspaces
$I_{n,m}$ is a tensor ideal, that is,
$$
f\circ I_{k,n} \subseteq I_{k,m} \supseteq I_{n,m}\circ g , \ \ 
h\otimes I_{m,n} \subseteq I_{m+m^\prime,n+n^\prime} \supseteq
I_{m,n}\otimes h .
$$
By $\Lambda$-linearity, we can assume that 
$f\in \hom_\cC (n,m)$,
$g\in \hom_\cC (k,n)$
and
$h\in \hom_\cC (m^\prime,n^\prime)$.
The argument used in
Proposition~\ref{Lambda} is based on the local nature of the
relations.
It works here as well.
Let 
$X_1 \otimes v - X_2 \otimes 1 \in I_{k,n}$ 
where $X_1$ and $X_2$ are diagrams related by the first Vogel relation.
Then
$f\circ X_1$ and $f\circ X_2$ are also related by the first Vogel
relation,
and
$(f\otimes 1) \circ (X_1 \otimes v - X_2 \otimes 1) \in I_{k,m}$.
The same argument works for the second Vogel relation 
and the tensor products.
Thus,  $\cC^\prime$ is a tensor category.

To compute the scalars, let us recall that
the span of the connected diagrams
with at least one singular point
$\hom^s_{\cC} (n,m)$ is a $\Lambda$-module~\cite[Prop 3.2]{Vog1}. 
Proposition~\ref{Lambda} implies that
$$
\bK_\cC = \hom_{\cC} (0,0)  
\cong S(\hom^1_{\cC} (0,0))
= \bQ [\delta] \otimes_\bQ S (\hom^s_{\cC} (0,0)), 
$$
because
$\delta$ 
is the only connected diagram from
$\hom^1_{\cC} (0,0)$
without singular points. 
The element $\theta$ is a free generator of the $\Lambda$-module
$\hom^s_{\cC} (0,0)$ \cite[Cor 4.6]{Vog1}, hence, 
$$
\bK_\cC \cong \bQ [\delta] \otimes_\bQ S (\Lambda \cdot \theta). 
$$
Let
$I^\prime_{0,0}$
be the
$\Lambda$-span of the first Vogel relations inside
$I_{0,0}$.
Since the first Vogel relation 
asserts $\Lambda$-linearity on $S (\Lambda \cdot
\theta)$,
$$
\hom_{\cC} (0,0) \otimes_{\bQ} \Lambda / I^\prime_{0,0}
\cong 
\bQ [\delta] \otimes_\bQ \mbox{Sym}_\Lambda (\Lambda \cdot \theta)/ (\theta -2t\delta)
\cong \Lambda [\delta].
$$
Evidently, the second Vogel relation has no further effect and
$$
\bK_{\cC^\prime} =  \hom_{\cC} (0,0) \otimes_{\bQ} \Lambda / I_{0,0} = 
\hom_{\cC} (0,0) \otimes_{\bQ} \Lambda / I^\prime_{0,0}
\cong \Lambda [\delta],
$$
proving the first statement.
Since $\fg_M$ is a metric Lie algebra,  $\fg_V$ is a metric Lie algebra. 
Furthermore, 
$$
\Endd_{\fg_M}(\fg_M) = \hom_{\cC} (1,1)  
\cong \bK_\cC \otimes_\bQ (\hom^c_{\cC} (1,1) \oplus (\hom^c_{\cC} (0,1)\otimes_\bQ \hom^c_{\cC} (1,0)))
\cong \bK_\cC  \otimes_\bQ (\bQ I_1 \oplus \Lambda \cdot \phi)
$$
because $\phi$ is a free generator of the $\Lambda$-module $\hom^s_{\cC} (1,1)$ \cite[Cor 4.6]{Vog1}
and $\hom^c_{\cC} (0,1) = 0$, $\hom^c_{\cC} (1,0)=0$
\cite[Prop. 4.3]{Vog1}.
Finally, 
$$
\Endd_{\fg_V}(\fg_V) =  \hom_{\cC^\prime} (1,1)  \cong  \bK_\cC  \otimes_\bQ (\bQ I_1 \oplus \Lambda \cdot \phi)
\otimes_{\bQ} \Lambda / I_{1,1} 
\cong \bK_{\cC^\prime}  \otimes_\bQ (\bQ I_1 \oplus \Lambda \cdot \phi) / (\phi -2tI_1) 
\cong \bK_{\cC^\prime} .
$$
%
\end{proof}

We say that a Lie $\cC$-algebra $\fg$ is {\em $V$-simple} if
it is quasisimple and a specialization
$\fg_M\rightarrow \fg$ extends to a specialization
$\fg_V\rightarrow \fg$.

Clearly, all this definition requires is a ring homomorphism 
$\Theta_\fg :\Lambda [\delta ] \rightarrow \bK_\cC$ 
satisfying the Vogel relations. Existence of such a homomorphism
is not clear to us, in general.
Vogel gives a categorical criterion 
for a quasisimple Lie algebra to be $V$-simple \cite{Vog2}
but it is of limited use.
To prove that
a quasisimple Lie superalgebra $\fg$ is $V$-simple \cite{Vog2}
it is easier
to use the homomorphism $\Theta_\fg$ constructed
at the end of Section~\ref{VrL} (notice that $\Theta_\fg (\delta)= \dim_\cC \fg$ in any category). 
In the final chapter we use the same strategy of constructing a ring homomorphism explicitly while
the Vogel categorical criterion fails.
So far, all known to us quasisimple Lie algebras are $V$-simple, 
hence, the following question is interesting.
\begin{que}
Let $\cC$ be a tensor category. Is a quasisimple Lie $\cC$-algebra 
$V$-simple?
\end{que}

Finally, let us discuss the connection between the Vogel plane  and
the Vogel ring. 
By {\em the Vogel plane} one  commonly understands
the weighted projective plane
$\bP_{1,2,3}^2 = \bP^2 / S_3$ \cite{LaMa,MkSV}.
A point\footnote{Landsberg and Manivel \cite{LaMa}, following Vogel
  \cite{Vog2},
assign a different point  
$((2\delta-\alpha):(2\delta-\beta):(2\delta-\gamma))$.} 
$(\alpha:\beta:\gamma)\in \bP_{1,2,3}^2 (\bC)$, 
where
$\alpha$, $\beta$, $\gamma$ 
are nonzero eigenvalues of Casimir operator
on the symmetric square of the adjoint representation
$S^2(\fg)$,
corresponds to a simple complex Lie algebra
$\fg$.
When the invariant form changes, 
the Casimir operator changes by a scalar, hence, the triple
$(\alpha:\beta:\gamma)$
is also defined only up to a scalar. This construction
has an issue with 
exceptional (according to Deligne) Lie algebras by Deligne
\cite{Deli}.
A Lie algebra of type $A_2$, $D_4$, $E_n$, $F_4$ or $G_2$ 
admits only two such eigenvalues:
$\alpha$ and $\beta$.
Nevertheless, it is sensible to define 
$$
\gamma:=5\delta -\alpha-\beta ,
$$
where
$\delta$ 
is the Casimir eigenvalue on the adjoint representation, because
the identity
$\alpha+\beta+\gamma = 5\delta$
holds in exceptional Lie algebras.
There is a more series issue with
$A_1$, it admits only one eigenvalue
$\alpha$. In this case, one assigns a whole line 
$\beta+\gamma = 5 \delta -\alpha$.

Now let us consider
a $V$-simple Lie algebra 
$\fg$ over a ring $\bK$
with Vogel character 
$\Theta_\fg :\Lambda [\delta ] \rightarrow \bK$.
Homomorphisms of graded rings
$$
\bQ[z_1,z_2,z_3]\hookleftarrow \bQ[z_1,z_2,z_3]^{S_3}\hookleftarrow \widetilde{\Lambda} \xrightarrow{\varphi} \Lambda
$$
give another homomorphism
$\widetilde{\Theta}_\fg:=\Theta_\fg\circ \varphi :\widetilde{\Lambda} \rightarrow \bK$.
Rings
$\bQ[z_1,z_2,z_3]^{S_3}$
and
$\widetilde{\Lambda}$
have a common ring of fractions 
$\bQ[z_1,z_2,z_3]^{S_3}[\omega^{-1}] = \widetilde{\Lambda} [\omega^{-1}]$,
hence, if 
$\widetilde{\Theta}_\fg (\omega)$
is invertible in 
$\bK$, 
then one gets a graded homomorphism
$$
\widehat{\Theta}_\fg :\bQ[z_1,z_2,z_3]^{S_3} \rightarrow \bK, \ \
\widehat{\Theta}_\fg (f) = \frac{\widetilde{\Theta}_\fg
  (f\omega)}{\widetilde{\Theta}_\fg (\omega)}.$$
Examples of algebras with undefined
$\widehat{\Theta}_\fg$
are $\sl_2$  and K3-surfaces in the next chapter.
In both cases
$\widetilde{\Theta}_\fg (\omega)=0$.
Another consequence of invertibility of  
$\widetilde{\Theta}_\fg (\omega)$
is that the
$\bK^\times$-orbit
$[\widehat{\Theta}_\fg]$
turns out to be a point of the weighted projective plane 
$\bP_{1,2,3}^2$
over the ring $\bK$.
Recall that the multiplicative group
$\bK^\times$
acts on the set
$\hom (\bQ[z_1,z_2,z_3]^{S_3}, \bK)$
via the grading of the ring $\bQ[z_1,z_2,z_3]^{S_3}$:
$$
(\alpha \cdot \Theta) (x_n) = \Theta (\alpha^{-n}x_n)
\mbox{ for }
\alpha\in \bK^\times, \ 
x_n\in\bQ[z_1,z_2,z_3]_n^{S_3}, \ 
\Theta \in \hom (\bQ[z_1,z_2,z_3]^{S_3}, \bK),
$$
and the set of $\bK$-points
$\bP_{1,2,3}^2 (\bK)$
is a subset of the set of orbits.
Note that $\widehat{\Theta}_\fg$
depends on the metric Lie algebra
$\fg$.
A different choice of a metric associated is linked by a scalar
$\alpha\in \bK^\times$, 
resulting in the homomorphism
$\alpha\cdot\widehat{\Theta}_\fg$.
Thus, the orbit
$[\widehat{\Theta}_\fg]\in\bP_{1,2,3}^2 (\bK)$
depends only on the Lie algebra itself, but not on its metric structure.

\section{Rozansky-Witten invariants} 

\subsection{The category of vector superbundles}
\label{superbundle}
A holomorphic manifold $X$
admits an associated tensor category, 
crucial for Kapranov's approach to Rozansky-Witten invariants
\cite{Kapr,RoWi}. 
We attach a slightly smaller category than Kapranov. We explain the difference after we explain the construction.

The objects in the category $\SVX$ of vector superbundles
are  locally free coherent sheaves $\cF=\cF_0 \oplus \cF_1$.
The tensor product is usual:
$$
(\cF \otimes \cG)_0 = (\cF_0 \otimes \cG_0)\oplus (\cF_1 \otimes \cG_1), \ 
(\cF \otimes \cG)_1 = (\cF_1 \otimes \cG_0)\oplus (\cF_0\otimes \cG_1). \ 
$$
A locally free coherent sheaf $\cF$ gives to two objects:
even $\cF^+ = \cF\oplus 0$ and odd
$\cF^- = 0\oplus \cF$.
The unit object is the even trivial line bundle $\cO_X^+$. 
The symmetric braiding on $\SVX$ is the usual superbraiding
$$
\tau (v_i \otimes w_j) = (-1)^{ij}  w_j \otimes v_i
$$
where $v_i$, $w_i$ are homogeneous of degree $i$ or $j$, i.e. local sections of $\cF_i$ and $\cG_j$, correspondingly.

The hom-sets are slightly unusual:
$$
\hom_{\SVX} (\cF_0\oplus\cF_1 ,\cG_0 \oplus \cG_1) = \oplus_{i,j,n} \Ext^{(i-j)+2n} (\cF_i,\cG_j).
$$
The composition is the cup-product of extensions. The tensor product of two morphisms $f=(f_n)$
and $g=(g_n)$ is defined by $(f\otimes g)_n = f_n\otimes g_n$.

The category $\SVX$  
is a full subcategory of the 2-periodic derived category
$D^2(X)$
of coherent sheaves on $X$
\cite{Kapu}. 
A superbundle $\cF_0\oplus\cF_1$ corresponds to a complex
$\oplus_{n\in\bZ} (\cF_0 [2n] \oplus\cF_1[2n+1])$
with zero differentials.

In the literature some  ``larger'' categories are sometimes considered.
First, one can get more morphisms in by considering all extensions:
$$
\hom_{\SV_1 (X)}  (\cF_0\oplus\cF_1 ,\cG_0 \oplus \cG_1) = \oplus_{i,j} \Ext^{\ast} (\cF_i,\cG_j) 
\; ,
$$
resulting in a category $\SV_1 (X)$: $\SV(X)$ is a subcategory of 
$\SV_1 (X)$ with the same objects but larger sets of morphisms.
The category $\SV_1 (X)$  
is a full subcategory of the unfolded derived category 
$\widehat{D}(X)$,
whose objects and tensor products are from $D^b(X)$,
while morphisms account
for all extensions:
$$
\hom_{\widehat{D}(X)}  ( \cF_*, \cG_*) = \oplus_{n\in\bZ} \hom_{D^b(X)}   (\cF_*,\cG_*[n]).
$$
For us all these larger categories
are irrelevant since 
all the necessary objects and morphisms for the Kapranov   theorem are
in $\SVX$. 

\subsection{Atiyah classes}
Let $\cT_X$ be the tangent sheaf on $X$,
$\cD_X^{< n}$ the sheaf of differential operators with holomorphic coefficients
of order less than $n$.
We have an exact sequence of $\cO_X-\cO_X$-bimodules
$$
0 \rightarrow
\cO_X \longrightarrow 
\cD_X^{< 2} \longrightarrow 
\cT_X \rightarrow 
0.
$$
Notice that on both $\cO_X$ and $\cT_X$ the right and the left actions
of $\cO_X$ coincide but on $ \cD_X^{< 2}$ they are different.
Given 
a locally free coherent sheaf $\cF$,
we get a new exact sequence by tensoring with it
$$
0 \rightarrow
\cF \longrightarrow 
\cD_X^{< 2} \otimes_{\cO_X}\cF  \longrightarrow 
\cT_X  \otimes_{\cO_X}\cF  \rightarrow 
0.
$$
This is extension is the Atiyah class 
$$
A_\cF \in \Ext^1 (\cT_X  \otimes_{\cO_X}\cF , \cF )
\subseteq
\hom_{\SVX} (\TX \otimes \cF,\cF).
$$
Let us remark that the standard Atiyah class is actually $-A_\cF$ 
\cite{Kapr}.
The following theorem has essentially been proved by Kapranov \cite{Kapr},
but a reader may benefit by looking at later reviews \cite{NiWi, RoWi}.

\begin{theorem}
\begin{mylist}
\item[(1)] $\fg_X = (\SVX, \TX, A_{\cT_X})$ is a Lie algebra.
\item[(2)] Every superbundle $\cF=\cF_0\oplus \cF_1$ is a representation of $\fg_X$ with
the action $A_{\cF}$.
\item[(3)] If $\varsigma$ is a holomorphic symplectic form on $X$ 
then $\fg_X$ is a metric Lie algebra with a form $\varsigma$.
\end{mylist}
\end{theorem}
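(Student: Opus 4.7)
The plan is to verify the three parts separately; parts (2) and (3) are essentially corollaries of the mechanism used to prove (1). It is useful to first unfold what each assertion means in $\SVX$. Because of the super-shift in $\TX = \cT_X^-$, the bracket $A_{\cT_X}$ is an element of $\Ext^1(\cT_X \otimes \cT_X, \cT_X)$ viewed as a degree-zero morphism from the even object $\TX \otimes \TX$ to the odd object $\TX$, while the metric associated to $\varsigma$ corresponds to an element of $\hom(\cT_X \otimes \cT_X, \cO_X)$, since $\TX \otimes \TX$ and $\1_{\SVX} = \cO_X^+$ are both even.

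For part (1), anti-commutativity is nearly formal: the classical Atiyah class $A_{\cT_X}$ factors through $\Ext^1(S^2 \cT_X, \cT_X)$, as one sees from the \v{C}ech cocycle built from the Christoffel symbols of a locally defined torsion-free connection on $X$. Since both factors of $\TX \otimes \TX$ are odd, the super-braiding $\bc$ acts by $-1$, and the classical symmetry translates to $m_1 + m_1 \circ \bc = 0$ in $\SVX$. Jacobi is the substantive point. Following Kapranov, I would choose a formal exponential for any holomorphic connection $\nabla$; this identifies $A_{\cT_X}$ with the $(1,1)$-curvature $R^\nabla$. The composition $A_{\cT_X} \circ (A_{\cT_X} \otimes I)$ is then represented by the wedge $R^\nabla \wedge R^\nabla$ acting on $\cT_X^{\otimes 3}$, and its cyclic antisymmetrization over the three tensor factors vanishes by the second Bianchi identity, yielding
\begin{equation*}
m_1 \circ (m_1 \otimes I) \circ (\widetilde{1} + \widetilde{(2,3,1)} + \widetilde{(3,1,2)}) = 0.
\end{equation*}

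For part (2), the action $\rho_\cF := A_\cF$ on an arbitrary locally free coherent $\cF$ satisfies the representation Jacobi identity by the same Bianchi/formal-geometry mechanism, now applied to the pair of classes $(A_{\cT_X}, A_\cF)$ rather than to two copies of $A_{\cT_X}$; specializing $\cF = \cT_X$ recovers (1) as the adjoint representation. For part (3), a holomorphic symplectic form $\varsigma \in H^0(X, \Omega^2_X)$ is a non-degenerate skew pairing $\cT_X \otimes \cT_X \to \cO_X$, and after the super-shift skewness becomes symmetry in $\SVX$, so $\varsigma$ defines a valid form $m_2 : \TX \otimes \TX \to \1$. Non-degeneracy supplies the dual coform $m_3 : \1 \to \TX \otimes \TX$ and the rigidity identity. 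Invariance of $m_2$ under $\fg_X$ is the vanishing of an $\Ext^1$-class whose \v{C}ech representative can be exhibited as $\bar{\partial}\varsigma$, which vanishes because $\varsigma$ is holomorphic.

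The main obstacle is the Jacobi identity in part (1). It is the heart of Kapranov's theorem and encodes a Bianchi identity hidden in the $L_\infty$-structure carried by the formal neighbourhood of the diagonal $X \hookrightarrow X \times X$; any proof must ultimately appeal either to such formal geometry or to an explicit \v{C}ech-cocycle computation matching the IHX/Jacobi relation. A secondary subtlety appears in part (3): on a general $X$ one cannot realize the invariance of $\varsigma$ pointwise via a global torsion-free connection, so one is forced to argue categorically using naturality of the Atiyah class together with holomorphicity of $\varsigma$.
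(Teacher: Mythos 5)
Your proposal and the paper diverge in an instructive way: the paper does not reprove Kapranov's theorem at all. Its entire proof consists of citing Kapranov's result in the larger category $\SV_1(X)$, observing that every morphism in his argument lies in an extension group of pure parity and hence the whole proof already takes place inside $\SVX$, and noting that the skew-symmetry of $\varsigma$ becomes symmetry after the odd shift $\TX=\cT_X^-$ (your part (3) makes exactly this last point). You instead sketch Kapranov's argument itself. That is a legitimate alternative, and your structural observations are right: the symmetry of $A_{\cT_X}$ (difference of Christoffel symbols of local torsion-free connections) combined with the odd braiding gives anticommutativity, and non-degeneracy of $\varsigma$ supplies the coform and the duality identity. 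What your write-up omits, and what the paper treats as the only point needing comment, is the verification that the argument survives the passage from $\SV_1(X)$ to the smaller hom-sets of $\SVX$.

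Two steps of your sketch are misattributed. First, the Jacobi identity is not a consequence of the second Bianchi identity: $\nabla R=0$ is what makes the curvature representative $\bar\partial$-closed, hence a well-defined class; the cyclic vanishing of $R^\nabla\wedge R^\nabla$ rests on the first Bianchi identity together with the extra symmetries of the K\"ahler (or hyperk\"ahler) curvature tensor, and for a general connection the cyclic sum is only $\bar\partial$-exact, so the identity holds in $\Ext^2$ rather than at the cochain level --- which is precisely where Kapranov's formal-geometry argument is needed, as you concede at the end. Second, the invariance of $\varsigma$ is not the vanishing of $\bar\partial\varsigma$; it follows at once from the naturality of the Atiyah class: $\varsigma:\cT_X\otimes\cT_X\rightarrow\cO_X$ is a morphism of sheaves, naturality gives $\varsigma\circ A_{\cT_X\otimes\cT_X}=A_{\cO_X}\circ(I\otimes\varsigma)$, and $A_{\cO_X}=0$. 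This is the same mechanism the paper invokes later, in the proof of quasisimplicity, to see that every morphism in $\SVX$ is $\fg_X$-equivariant.
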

\begin{proof}
Kapranov's original proof establishes this theorem in the bigger category $\SV_1(X)$ \cite{Kapr}.
We just need to point out that everything in Kapranov's proof happens in either odd or even extensions,
so the theorem actually holds in $\SVX$.
In particular,
observe that $\varsigma$ is skew-symmetric and $\TX$ is odd, hence $\varsigma$ is symmetric in $\SVX$.
\end{proof}

At the moment, the difference between $\SVX$ and $\SV_1 (X)$ looks purely cosmetic.
It becomes crucial when one addresses quasisimplicity.

\subsection{Quasisimplicity}
Let $S^n\cF$ (or $T^n\cF$, or $\Lambda^n\cF$ )
be the $n$-th symmetric (or tensor, or exterior)
power of a locally free coherent sheaf $\cF$.
Let us make a couple of general observations before addressing quasisimplicity of $\fg_X$.
First, observe that $H^{m}(X,\cF)=0$ for all $m>\dim_\bC X$
\cite[Cor 4.39]{Vois}.
Secondly, observe\footnote{This corresponds to 
$T^2 L(\omega_1) \cong L(2\omega_1)\oplus L(\omega_2)\oplus  L(0)$
for the representations of $\sp (n)$.}
that for a holomorphic symplectic manifold $X$
$$
\Endd (\cT_X) \cong T^2 \cT_X \cong S^2 \cT_X \oplus \Lambda^2 \cT_X 
\cong S^2 \cT_X \oplus ( \cL_X \oplus \cO_X)
$$
with the trace map splitting the morphism
$\cO_X\rightarrow  \Lambda^2 \cT_X$,
in particular, on an open set 
$U\subseteq X$
$$
\cL_X (U)=\{ F\in 
\Lambda^2 \cT_X (U) \subseteq\Endd (\cT_X (U)) 
\;\mid\;
\mbox{Tr}(F)=0\}.
$$

%

\begin{theorem}
\label{manifold_simple}
Let $X$ be a holomorphic symplectic manifold. 
Then $\fg_X$  is quasisimple if and only if \newline
$H^{2*}(X,S^2\cT_X \oplus \cL_X)=0$.
\end{theorem}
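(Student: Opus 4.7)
The plan is to exploit the fact that $\fg_X$ already carries a metric structure from the symplectic form $\varsigma$ (part (3) of the theorem above), so quasisimplicity reduces to the single condition that the natural map $\bK_\cC\to\hom_{\fg_X}(\TX,\TX)$ be an isomorphism, where $\cC=\SVX$. I would compute both sides directly and then identify the natural map.

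From the definition of morphisms in $\SVX$,
$$\bK_\cC = \hom_\cC(\cO_X^+,\cO_X^+) = \bigoplus_{n\geq 0}\Ext^{2n}(\cO_X,\cO_X) = H^{2*}(X,\cO_X),$$
and
$$\hom_\cC(\TX,\TX) = \bigoplus_{n\geq 0}\Ext^{2n}(\cT_X,\cT_X) = H^{2*}(X,\Endd(\cT_X)).$$
Using $\varsigma$ to identify $\cT_X$ with $\cT_X^*$ and the decomposition $\Endd(\cT_X)\cong S^2\cT_X\oplus\cL_X\oplus\cO_X$ recorded above,
$$\hom_\cC(\TX,\TX) \cong H^{2*}(X,S^2\cT_X)\oplus H^{2*}(X,\cL_X)\oplus H^{2*}(X,\cO_X).$$
The natural map $v\mapsto v\cdot I_{\TX}$ factors through the trace splitting $\cO_X\hookrightarrow\Endd(\cT_X)$, $1\mapsto I_{\cT_X}$, and so coincides up to a nonzero rational scalar with the identification of $\bK_\cC$ with the third summand.

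The decisive step is to prove $\hom_{\fg_X}(\TX,\TX) = \hom_\cC(\TX,\TX)$, i.e., every $\SVX$-morphism $\TX\to\TX$ is automatically $\fg_X$-equivariant. This amounts to the naturality of the Atiyah class: for every $f\in\Ext^{2n}(\cT_X,\cT_X)$ one needs $f\circ A_{\cT_X} = A_{\cT_X}\circ(I_{\TX}\otimes f)$ in $\Ext^{2n+1}(\cT_X\otimes\cT_X,\cT_X)$. Conceptually $A_{(-)}$ is a natural transformation between the derived endofunctors $\cT_X\otimes(-)$ and $(-)[1]$ on locally free sheaves, and the required equality is simply its naturality evaluated on $f$; I would check it from a jet-bundle presentation of $A_\cF$ as in Kapranov, or via a diagram chase in the 2-periodic derived category $D^2(X)$.

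Granting this, the natural map $\bK_\cC\to\hom_{\fg_X}(\TX,\TX)$ is an isomorphism if and only if the first two summands above vanish, i.e., if and only if $H^{2*}(X,S^2\cT_X\oplus\cL_X)=0$. The main obstacle is the naturality step: while it is morally immediate from Kapranov's philosophy that bundles are $\fg_X$-modules and bundle maps are $\fg_X$-module maps, extending it to $\SVX$-morphisms of arbitrary even Ext-degree requires a careful derived-cup-product argument with $A_{\cT_X}$; everything else is a direct computation with Ext groups and the symplectic decomposition of $\Endd(\cT_X)$.
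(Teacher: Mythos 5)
Your proposal is correct and follows essentially the same route as the paper: both compute $\bK_{\SVX}=H^{2*}(X,\cO_X)$, identify $\Endd_{\fg_X}(\fg_X)$ with $H^{2*}(X,T^2\cT_X)$ via the naturality of the Atiyah class (which the paper also invokes, stating that every $\SVX$-morphism is automatically $\fg_X$-equivariant), and then use the decomposition $T^2\cT_X\cong S^2\cT_X\oplus\cL_X\oplus\cO_X$ with the natural map landing as the identity on the third summand. The only difference is one of emphasis: you flag the equivariance step as needing a careful derived-category argument, whereas the paper treats it as a known consequence of naturality.
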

\begin{proof}
The scalars of the category $\SVX$ are
$$
\bK_{\SVX} = \Endd_{\SVX} (\cO_X^+)
= \Ext^{2*} (\cO_{X},\cO_{X})
= H^{2*} (X, \cO_X).
$$
The naturality of the Atiyah class means that 
every homomorphism in $\SVX$ is a homomorphism
of representations of $\fg_X$. Hence,
$$
\Endd_{\fg_X} (\fg_X)
= \Ext^{2*} (\cT_X,\cT_X)
= H^{2*} (T^2\cT_X )
=  
H^{2*} (X,S^2\cT_X) \oplus H^{2*} (X,\cL_X) \oplus H^{2*} (X, \cO_X) 
$$
and the natural map $\bK_\SVX\rightarrow \hom_{\fg_X} (\fg_X, \fg_X)$
is the identity on the third component.
\end{proof}

The even cohomology is easy to control in the case of K3-surfaces.
This is in sharp contrast to the general behaviour of
the symmetric plurigenus $Q_n(X):=H^0 (X, S^n \cT^*_X)$
that is not a topological invariant of a complex manifold $X$ 
\cite{Bru}.
The crucial argument for the next theorem has been explained to us by A. Kuznetsov.
\begin{theorem}
\label{surface_simple}
If $X$ is a K3-surface then
$\fg_X$
is quasisimple.
\end{theorem}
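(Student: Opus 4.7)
The plan is to apply Theorem~\ref{manifold_simple}, reducing the task to showing $H^{2*}(X, S^2\cT_X \oplus \cL_X)=0$. I would dispose of the two summands separately, using crucially that $\dim_\bC X = 2$, that $K_X \cong \cO_X$, and that the holomorphic symplectic form induces an isomorphism $\cT_X\cong \Omega^1_X$.

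First I would show that $\cL_X = 0$. Since $\dim X = 2$, the sheaf $\Lambda^2\cT_X$ has rank one, and $\Lambda^2\cT_X \cong K_X^{-1} \cong \cO_X$. Under the inclusion $\Lambda^2\cT_X \hookrightarrow \Endd(\cT_X)$ coming from $\Endd(\cT_X) \cong T^2 \cT_X = S^2 \cT_X \oplus \Lambda^2 \cT_X$, the local generator of $\Lambda^2\cT_X$ is identified with a nonzero scalar multiple of the identity endomorphism (an easy check in Darboux coordinates), whose trace equals $2 \neq 0$. Hence the restriction of the trace to $\Lambda^2\cT_X$ is a nonzero morphism between line bundles on the connected variety $X$, so an isomorphism, and $\cL_X = 0$.

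Next I would reduce the vanishing for $S^2\cT_X$ to a single statement. Serre duality on $X$ (using $K_X \cong \cO_X$) together with $\cT_X \cong \Omega^1_X$ yields
$$
H^2(X, S^2\cT_X) \;\cong\; H^0(X, S^2\Omega^1_X)^* \;\cong\; H^0(X, S^2\cT_X)^*,
$$
so everything boils down to $H^0(X, S^2\cT_X) = 0$. To obtain this, I would take $H^0$ of the splitting $\Endd(\cT_X) \cong S^2\cT_X \oplus \Lambda^2\cT_X$ and use $H^0(X, \Lambda^2\cT_X) = H^0(X, \cO_X) = \bC$ to conclude
$$
\dim H^0(X, S^2\cT_X) \;=\; \dim H^0(X, \Endd(\cT_X)) - 1.
$$

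The hard step, and the one that genuinely uses more than elementary linear algebra, is the claim $H^0(X, \Endd(\cT_X)) = \bC\cdot I$, i.e.\ the simplicity of the coherent sheaf $\cT_X$. This is precisely the stability input attributed to A.~Kuznetsov in the introduction: the tangent bundle of a K3--surface is slope-stable with respect to any polarization, hence simple, so its only global endomorphisms are scalars. Granting this, $H^0(X, S^2\cT_X) = 0$, both even cohomologies of $S^2\cT_X \oplus \cL_X$ vanish, and Theorem~\ref{manifold_simple} yields that $\fg_X$ is quasisimple.
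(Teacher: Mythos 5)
Your proposal is correct in substance but takes a genuinely different route from the paper at the decisive step. Both arguments invoke Theorem~\ref{manifold_simple}, observe that $\cL_X=0$ in dimension two, and reduce, via Serre duality with $K_X\cong\cO_X$ and $\cT_X\cong\Omega^1_X$, to the single vanishing $H^0(X,S^2\cT_X)=0$ (the paper leaves the Serre-duality reduction implicit, so your spelling it out is welcome). The divergence is in how that vanishing is obtained. You deduce it from the simplicity of $\cT_X$, i.e.\ $H^0(X,\Endd(\cT_X))=\bC\cdot I$, which you in turn get from slope-stability of the tangent bundle. The paper deliberately gets by with only Miyaoka's \emph{semistability}: given a nonzero symmetric section $S$, it forms $S'=I+\lambda_0S$ with $\lambda_0$ chosen so that the (necessarily constant) function $\det(I+\lambda_0S)$ vanishes; since $I$ is skew and $S$ is symmetric, $S'$ is nowhere zero, hence of constant rank one, and its image is a line bundle $\cL$ that is simultaneously a subsheaf and a quotient of $\cT_X$. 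Semistability applied on both sides forces $\mu_H(\cL)=0$ for every ample $H$, so $\cL\cong\cO_X$ and $\cT_X$ acquires a nonzero section, contradicting $H^0(X,\cT_X)=0$. Your route is shorter once stability is granted, and stability of $\cT_X$ is indeed a citable theorem; the paper's route buys independence from that stronger input at the cost of the determinant trick. One point you should repair: ``stable with respect to any polarization'' presupposes that $X$ is projective, whereas the theorem is asserted for an arbitrary K3-surface; in the non-algebraic case you would need stability (or at least simplicity) with respect to a K\"ahler class, which the paper sidesteps by a separate Picard-group argument for non-algebraic $X$.
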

\begin{proof}
By Theorem~\ref{manifold_simple}
it suffices to prove that
$H^0(X,S^2\cT_X)$
vanishes
because $\cL_X=0$ for a surface.
Suppose 
$H^0(X,S^2\cT_X)\neq 0$.
Then we have two sections of 
$$
\Endd(\cT_X) \cong T^2\cT_X = \Lambda^2 \cT_X \oplus S^2 \cT_X:
$$
the skew symmetric identity map $I\in\Endd(\cT_X)$
and some symmetric section $S\in\Endd(\cT_X)$.
The determinant $\det (I+\lambda S)$ is a global function on $X$
for each $\lambda \in \bC$.
Thus, it is constant and one can choose $\lambda_0\in\bC$ such that
$\det (I+\lambda_0 S)=0$. We conclude that
the rank of $S^\prime := I+\lambda_0 S$
is $1$ at a generic point.

Furthermore, since $I$ is skew-symmetric and $S$ is symmetric,
$S^\prime$ does not vanish, so the rank is $1$ at each point.
Thus, the image of $S^\prime$ is an invertible sheaf $\cL$.
Let us show that $\cL$ must be trivial.

If $X$ is not algebraic then the only line bundle on $X$ is trivial.
If $X$ is algebraic then $\cT_X$ is semistable \cite{Miy}
(cf. \cite[Ch. 7.4]{Huy}) 
and 
$\mu_H (\cL)\leq \mu_H (\cT_X)=0$ for any ample divisor $H$.
The dual of the natural map
$\cT_X\rightarrow \cL$ is an embedding
$\cL^\ast \hookrightarrow \cT^\ast_X\cong \cT_X$,
hence
$-\mu_H (\cL)=\mu_H (\cL^\ast) \leq \mu_H (\cT_X)=0$.
Thus, $\cL\cdot H = \mu_H (\cL) = 0$
for any ample divisor $H$, proving that $\cL$ is trivial. 

Since $\cL$ is trivial, $\cL$ and consequently $\cT_X$ has a nonzero section.  
This contradicts
$H^0(X,\cT_X)= 0$.
\end{proof}

%

\subsection{$V$-simplicity}
The specialization $\RW: \fg_M \rightarrow \fg_X$ is known as 
the Rozansky-Witten invariants (a.k.a weight system)
of a holomorphic symplectic manifold $X$.
It is computed up to various degrees of explicitness for many concrete $X$ \cite{NiWi}.
Let
$\varsigma \in H^0(X,\Lambda^{2}\cT^\ast)$
be the holomorphic symplectic form on
$X$,
$\overline{\varsigma}\in H^0(X,\Omega^{0,2})$
its conjugate form,
and 
$[\overline{\varsigma}] \in H^2(X,\cO_X)$ 
the class corresponding to
$\varsigma$
under the natural isomorphism
$H^0(X,\Omega^{0,2})\cong H^2(X,\cO_X)$.
Using $\varsigma$,
one gets natural embeddings $\Lambda^{n+m}\cT^\ast_X \hookrightarrow \hom (T^n \cT_X,T^m\cT_X)$
so that for $\gamma\in \hom_\cC (\fg_M^{\otimes n}, \fg_M^{\otimes m})$
with $k$ singular trivalent points\footnote{Observe that $k+n+m$ is always even.}
$$
\RW_2 (\gamma) \in 
H^k(X,\Lambda^{n+m}\cT^\ast_X)
\hookrightarrow H^k (X, \hom (T^n \cT_X,T^m\cT_X))
\hookrightarrow \hom_{\SVX} (\fg_X^{\otimes n}, \fg_X^{\otimes m}).
$$
In particular, if
$X$ is a K3-surface, then $RW_2(\gamma)=0$ whenever $\gamma$ has at least three trivalent singular points.
For the diagrams with $RW_2(\gamma)\neq 0$ we have \cite{NiWi}
$$
\RW_2 (\delta) = -2, \ 
\RW_2 (\phi) = -24[\overline{\varsigma}], \  
\RW_2 (\theta) = 48[\overline{\varsigma}] \in H^2(X,\cO_X). \ %
$$

\begin{theorem}
\label{surface_Vsimple}
If $X$ is a K3-surface 
then $\fg_X$ 
is a $V$-simple Lie algebra.
\end{theorem}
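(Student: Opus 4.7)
The plan is to extend the Rozansky-Witten specialization $\RW:\fg_M\to\fg_X$ to a specialization $\fg_V\to\fg_X$. Since $\fg_X$ is already quasisimple by Theorem~\ref{surface_simple}, following the strategy indicated at the end of Section~\ref{VrL}, it suffices to construct a ring homomorphism $\Theta_{\fg_X}:\Lambda[\delta]\to\bK_{\SVX}$ such that for every defining relation $X_1\otimes v - X_2\otimes 1\in I_{m,n}$ of $\cC^\prime$ one has $\Theta_{\fg_X}(v)\cdot\RW_2(X_1)=\RW_2(X_2)$ in $\hom_{\SVX}(\fg_X^{\otimes m},\fg_X^{\otimes n})$; the triple $(\Theta_{\fg_X},\RW_2,I_{\fg_X})$ then realizes the desired specialization.

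First I would identify the scalars: on a K3-surface $\bK_{\SVX}=H^{2*}(X,\cO_X)=\bC\oplus\bC[\overline{\varsigma}]$, with $[\overline{\varsigma}]^2=0$ because $H^{\geq 4}(X,\cO_X)=0$. Guided by the computed values $\RW_2(\delta)=-2$, $\RW_2(\phi)=-24[\overline{\varsigma}]$, $\RW_2(\theta)=48[\overline{\varsigma}]$, I would define
$$
\Theta_{\fg_X}(\delta):=-2,\qquad \Theta_{\fg_X}(t):=-12[\overline{\varsigma}],\qquad \Theta_{\fg_X}\bigl|_{\Lambda_n}:=0\ \text{for every}\ n\geq 2.
$$
Well-definedness as a graded $\bQ$-algebra homomorphism is automatic: any product of two positive-degree elements of $\Lambda$ either lands in $\Lambda_{\geq 2}$ (sent to $0$) or projects into $\bC\,[\overline{\varsigma}]^2=0$. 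As a consistency check, $\Theta_{\fg_X}(2t\cdot\delta)=(-24[\overline{\varsigma}])(-2)=48[\overline{\varsigma}]=\RW_2(\theta)$, compatible with the identification $\theta\equiv 2t\cdot\delta$ in $\bK_{\cC^\prime}$ established in the proof of Theorem~\ref{vog_m}.

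The main calculation is verifying that the pair $(\Theta_{\fg_X},\RW_2)$ annihilates the Vogel ideal. The decisive geometric input is that on a K3-surface any $\gamma\in\hom_\cC(n,m)$ with $k$ trivalent singular vertices satisfies $\RW_2(\gamma)\in H^k(X,\Lambda^{n+m}\cT^*_X)=0$ unless both $k\leq 2$ and $n+m\leq 2$; and cup product with $\Theta_{\fg_X}(t)\in H^2(X,\cO_X)$ shifts cohomological degree by $2$, hence kills any class of positive degree since $H^{\geq 3}$ of any coherent sheaf vanishes on a surface. Inserting $v\in\Lambda_n$ at a trivalent vertex adds $2n$ singular vertices. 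For $n\geq 2$ both sides of the first Vogel relation vanish, matching $\Theta_{\fg_X}(v)=0$. For $n=1$, i.e.\ $v=\alpha t$, the source diagram $X_1$ already contains a trivalent vertex, so $X_2$ has at least three singular vertices and $\RW_2(X_2)=0$; meanwhile $\RW_2(X_1)$ sits in cohomological degree $\geq 1$, so cupping with $\Theta_{\fg_X}(t)$ lands in degree $\geq 3$ and is also $0$. The case $n=0$ is tautological, and the second Vogel relation reduces to $\Theta_{\fg_X}(2t)=-24[\overline{\varsigma}]=\RW_2(\phi)$, which is built in by construction.

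The main obstacle is ensuring the case analysis above is exhaustive and uniform across all relations $I_{m,n}$. Since every nontrivial insertion of a positive-degree Vogel element either overshoots the permitted cohomological range or creates more than two singular vertices, the two sides of each relation vanish simultaneously on a K3-surface, so the argument collapses cleanly. I expect the remaining work to be essentially formal: assembling $\Theta_{\fg_X}$ and $\RW_2$ into a tensor functor $\cC^\prime\to\SVX$ and reading off the specialization of $\fg_V$ to $\fg_X$.
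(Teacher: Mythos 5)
Your proposal is correct and follows essentially the same route as the paper: construct the graded character $\Theta$ on $\Lambda[\delta]$ with values in $\bK_{\SVX}\cong H^{2*}(X,\cO_X)=\bC[z]/(z^2)$, observe that every Vogel relation except $\theta=2t\delta$ and $\phi=2tI$ holds because both sides vanish for cohomological-degree reasons on a surface, and verify those two directly against the computed values of $\RW_2(\delta)$, $\RW_2(\theta)$, $\RW_2(\phi)$. Two minor remarks: your normalization $\Theta(t)=-12[\overline{\varsigma}]$ is the internally consistent one (the paper states $\Theta_X(t)=-24z$ but its own check that both sides of $\phi=2tI$ equal $-24z$ forces $\Theta_X(2t)=-24z$, i.e.\ your value), and your explicit case analysis on the degree of the inserted element $v$ is a useful amplification of the paper's one-line assertion that the remaining relations hold "for the trivial reason."
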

\begin{proof}
By Theorem~\ref{surface_simple}  the Lie algebra $\fg_X$ is quasisimple and
$$
\Endd_{\fg_X} (\fg_X) \cong H^{2*} (X, \cO_X) = \bC [z] / (z^2)
\ \mbox{ where } \
z= [\overline{\varsigma}] \in H^2(X,\cO_X).
$$
A graded homomorphism
$$
\Theta_X : \Lambda \rightarrow 
\bC [z] / (z^2), \ \ 
\Theta_X(t) = -24 z
$$
is uniquely determined 
since $\Lambda_0=\bQ$ and 
$\Lambda_1=\bQ t$.
We claim that $\Theta_X$ defines a required specialization.
Indeed, most of the relations defining $\cC^\prime$ hold in $\SVX$ 
for the trivial reason:
both sides are zero.
The relations\footnote{It is also clear by applying the metric 
to $\phi = 2tI$
that $\phi = 2tI$ implies  $\theta = 2t \delta$.} 
$\theta = 2t \delta$ and $\phi = 2tI$ hold as both sides become
$48 z$ and $-24 z$.
\end{proof}

Theorem~\ref{surface_Vsimple} proves Westbury's conjecture for a K3-surface.
Westbury has conjectured it for any compact irreducible holomorphic symplectic manifold $Y$.
We finish by stating this conjecture more carefully as a series of questions.
The quasisimplicity of $\fg_Y$ can be established 
by Theorem~\ref{manifold_simple} and boils down to the following
question.\footnote{
Justin Sawon has reported to us an answer to Question 7: surfaces.
Since $H^2(Y,\Omega^2)=H^2(Y,\cO_Y)\oplus H^2(Y,\cL_Y)$, a manifold $Y$ with a positive answer to Question 7 
must have $h^{2,2}(Y)=1$. If $\dim(Y)>2$, then one can use its three Kahler structures
to construct two linearly independent classes in $H^2(Y,\Omega^2)$, in particular, $h^{2,2}(Y)>1$.}
\begin{que}
Which compact irreducible holomorphic symplectic manifolds $Y$
satisfy $H^{2*}(Y,S^2\cT_Y\oplus \cL_Y)=0$?
\end{que}

If $\dim_\bC Y = n$ 
then the scalars of $\SV (Y)$ are
$
\bK_{\SV (Y)} \cong H^{2*} (Y, \cO_Y) = \bC [z] / (z^n)
$
where
$
z= [\overline{\varsigma}] \in H^2(Y,\cO_Y).
$
Thus, the $V$-simplicity of $\fg_Y$ boils down to some explicit identities
on the Rozansky-Witten invariants.
\begin{que}
Does there exist a graded homomorphism
$\Theta_Y:
\Lambda
\rightarrow 
\bC [z] / (z^n)$
that gives rise to a specialization $\fg_V \rightarrow \fg_Y$? 
\end{que}

It would be interesting if
these homomorphisms are compatible for different $Y$ or, at least, for the Hilbert schemes.
\begin{que}
Does there exists a $V$-simple Lie algebra $\fg_H$ over $\bC [z]$ that specializes
to $\fg_{X^{[n]}}$ for all $X$ and $n$
where $X^{[n]}$ is the Hilbert scheme of $n$ points on a K3-surface $X$,
so that the specialization $\fg_M\rightarrow \fg_{X^{[n]}}$ factorises as
$\fg_M\rightarrow \fg_H\rightarrow \fg_{X^{[n]}}$?
\end{que}


\begin{thebibliography}{9999}
\bibitem{AmLe}
A. Amitsur, J Levitzki,
Minimal identities for algebras,
{\em Proc. Amer. Math. Soc.}, 1 (1950), 449-463. 

\bibitem{BaBe} I. Bajo, S. Benayadi, 
Lie algebras admitting a unique quadratic structure, 
{\em Comm. Algebra},  25 (1997), no. 9, 2795-2805.

\bibitem{BaOl}
Y. Bakhturin, A. Olshanskii,
Identities, (Russian),  {\em Current problems in mathematics, fundamental directions, Vol. 18}, 117-240, 
Itogi Nauki i Tekhniki, Moscow, 1988. 



\bibitem{Bru} P. Brjukman, 
Tensor differential forms on algebraic varieties (Russian),
{\em
Izv. Akad. Nauk SSSR Ser. Mat.}, 
35 (1971), 1008-1036.


\bibitem{Deli}
P. Deligne, 
La s\'{e}rie exceptionnelle de groupes de Lie,
{\em C. R. Acad. Sci. Paris S\'{e}r. I Math.}, 322 (1996), no. 4, 321-326. 




\bibitem{HiVa}
V. Hinich, A. Vaintrob, Cyclic operads and algebra of chord diagrams,
{\em Selecta Math. (N.S.)}, 8 (2002), no. 2, 
237-282.

\bibitem{Hum} J. E. Humphreys, 
{\em Introduction to Lie algebras and representation theory},
Graduate Texts in Mathematics, 9, Springer-Verlag, 1978.

\bibitem{Huy}
D. Huybrechts, {\em Lectures on K3 surfaces},
preprint,
http://www.math.uni-bonn.de/people/huybrech/K3Global.pdf.

\bibitem{JoSt}
A. Joyal, R. Street,
Braided tensor categories, 
{\em Adv. Math.} 102 (1993), no. 1, 20-78. 

\bibitem{Kac}
V. Kac, 
Lie superalgebras,
{\em Advances in Math.}, 26 (1977), 
no. 1, 8-96. 

\bibitem{Kapr} M. Kapranov, 
Rozansky-Witten invariants via Atiyah classes,
{\em Compositio Math.} 115 (1999), no. 1, 71-113.

\bibitem{Kapu} A. Kapustin,
Topological field theory, higher categories, and their applications,
{\em Proceedings of the International Congress of Mathematicians. v. III}, 2021-2043, 
Hindustan Book Agency, 2010.

\bibitem{KeLa}
G. Kelly, M. Laplaza,
Coherence for compact closed categories,
{\em J. Pure Appl. Algebra},  19 (1980), 193-213.

\bibitem{Keme}
A. Kemer, 
{\em Ideals of identities of associative algebras,}
Translations of Mathematical Monographs, 87,
AMS, 1991.

\bibitem{Knei}
J. Kneissler, 
On spaces of connected graphs. II. 
Relations in the algebra $\Lambda$.
Knots in Hellas '98, Vol. 3 (Delphi), 
{\em J. Knot Theory Ramifications}, 10 (2001), no. 5, 667-674. 

\bibitem{Kost}
A. Kostrikin, {\em Around Burnside},
Ergebnisse der Mathematik und ihrer Grenzgebiete (3), 20, Springer-Verlag, 1990.

\bibitem{LaMa}
J. Landsberg, L. Manivel, 
A universal dimension formula for complex simple Lie algebras,
{\em Adv. Math.}, 201 (2006), no. 2, 379-407.


\bibitem{Miy}
Y. Miyaoka, Deformations of a morphism along a foliation and applications, 
{\em Algebraic geometry, Bowdoin, 1985}, 
Proc. Sympos. Pure Math., 46, 245-268.,
AMS, 1987.

\bibitem{MkSV}
R. Mkrtchyan, A. Sergeev, A. Veselov,
Casimir values for universal Lie algebra,
preprint, 
arXiv:1105.0115.

\bibitem{NiWi}
M. Nieper-Wi{\ss}kirchen, 
{\em Chern numbers and Rozansky-Witten invariants of compact hyper-Kahler manifolds,}
World Scientific Publishing Co. Inc., 2004.

\bibitem{RoWi}
J. Roberts, S. Willerton, 
On the Rozansky-Witten weight systems,
{\em Algebr. Geom. Topol.}, 
10 (2010), no. 3, 1455-1519. 

\bibitem{RW} 
L. Rozansky, E. Witten,  
Hyper-Kahler geometry and invariants of 3-manifolds, 
{\em Selecta Math. New Series} 3 (1997), 401-458

\bibitem{Samo} 
L. Samoilov, 
An analogue of the Amitsur-Levitzki theorem for matrix superalgebras,
{\em Sib. Math. J.}, 51 (2010), no. 3, 491-495. 

\bibitem{Sawo} J. Sawon,
When is a Lie algebra not a Lie algebra?, 
{\em Proceedings of the IXth Oporto Meeting on Geometry}, 2000, 
http://www.math.ist.utl.pt/{$\widetilde{\ }$}jmourao/om/omix/proc.html.

\bibitem{Vog1}
P. Vogel, 
Algebraic structures on modules of diagrams, 
{\em J. Pure Appl. Algebra}, 
215 (2011), no. 6, 1292-1339. 

\bibitem{Vog2} 
P. Vogel,
{\em Universal Lie Algebra},
preprint, 
http://people.math.jussieu.fr/~vogel/A299.ps.gz.

\bibitem{Vois}
C. Voisin, 
{\em Hodge theory and complex algebraic geometry I}, 
Cambridge University Press, 2007.


\bibitem{West}
B. Westbury, 
$R$-matrices and the magic square, 
{\em J. Phys. A}, 36 (2003), no. 7, 1947-1959. 

\end{thebibliography}
\end{document}